\numberwithin{equation}{section}
\def\cb{{\mathcal B}}
\def\ch{{\mathcal H}}
\def\cs{{\mathcal S}}
\def\cx{{\mathcal X}}
\def\ga{{\mathfrak A}}
\def\gb{{\mathfrak B}}
\def\bc{{\mathbb C}}
\def\bd{{\mathbb D}}
\def\bm{{\mathbb M}}
\def\bn{{\mathbb N}}
\def\br{{\mathbb R}}
\def\bt{{\mathbb T}}
\def\a{\alpha}
\def\b{\beta}
\def\g{\gamma}  
\def\d{\delta}  
\def\eps{\varepsilon}
\def\l{\lambda} 
\def\m{\mu}
\def\n{\nu}
\def\s{\sigma} 
\def\t{\tau}
\def\f{\varphi}  \def\F{\Phi}
\def\th{\theta} 
\def\om{\omega}
\def\id{\hbox{id}}
\def\ker{{\rm Ker}}
\newtheorem{thm}{Theorem}[section]
\newtheorem{lem}[thm]{Lemma}
\newtheorem{defin}[thm]{Definition}
\newtheorem{cor}[thm]{Corollary}
\newtheorem{prop}[thm]{Proposition}
\newtheorem{rem}[thm]{Remark}
\newtheorem{conj}[thm]{Conjecture}
\theoremstyle{definition}
\newtheorem{examp}{Example}[section]
\def\dist{\mathop{\rm dist}}
\def\di{{\rm d}}
\def\idd{{1}\!\!{\rm I}}
\DeclareMathAlphabet{\mathpzc}{OT1}{pzc}{m}{it}
\begin{document}
\title[spectral properties and decoherence]
{Spectral and ergodic properties of completely positive maps and decoherence}
\author[F. Fidaleo]{Francesco Fidaleo}
\address{Francesco Fidaleo\\
Dipartimento di Matematica \\
Universit\`{a} di Roma Tor Vergata\\
Via della Ricerca Scientifica 1, Roma 00133, Italy} \email{{\tt
fidaleo@mat.uniroma2.it}}
\author[F. Ottomano]{Federico Ottomano}
\address{Federico Ottomano\\
Department of Computer Science\\
University of Liverpool\\
Ashton Street, Liverpool, L69 3BX, United Kingdom}\email{{\tt
Federico.Ottomano@liverpool.ac.uk}}
\author[S. Rossi]{Stefano Rossi}
\address{Stefano Rossi\\
Dipartimento di Matematica \\
Universit\`{a} degli studi di Bari Aldo Moro\\
Via Edoardo Orabona  4, Bari 70125, Italy} \email{{\tt
stefano.rossi@uniba.it}}

\begin{abstract}
\vskip0.1cm\noindent
In an attempt to propose more general conditions for decoherence to occur,
 we study spectral and ergodic properties of unital, completely positive maps on not necessarily unital $C^*$-algebras, with a particular focus on gapped maps for which the transient portion of the arising dynamical system can be separated from the persistent one. After some general results, we first devote our attention  to the abelian case by investigating the unital $*$-endomorphisms of, in general non-unital, $C^*$-algebras, and their spectral structure. The finite dimensional case is also investigated in detail, and examples are provided of unital completely positive maps for which the persistent part of the associated dynamical system is equipped with the new product making it a $C^*$-algebra, and the map under consideration restricts to a unital $*$-automorphism for this new $C^*$-structure, thus generating a conservative dynamics on that persistent part.\\
 
 \smallskip
 \noindent
 {\bf 2020 Mathematics Subject Classification}: {37A55, 46L53, 46L55, 47A10, 57S05.}\\
 
  \smallskip
 \noindent
 {\bf Key-words and phrases}: {Operator Algebras, Completely Positive Maps, Spectrum, Decoherence.}
 \end{abstract}
\maketitle

\section{Introduction}
\emph{Quantum decoherence induced by the environment} is a rather suggestive way to give a dynamical explanation to the collapse
of the wave function  caused by a measurement process. Roughly speaking, for a dynamical system it simply means the possibility to separate the persistent part that is assumed to be a $C^*$-subalgebra of the original system on which the dynamics restricts to a conservative (or Hamiltonian) one, from the transient dynamics which disappears after the repeated action of the dynamics.

Without even attempting an exhaustive explanation of the physics involved, we rather limit ourselves to recalling  how quantum decoherence is usually framed in a purely mathematical context.
In the definition introduced by Blanchard and Olkiewicz (see
\cite{BO, O1, O2} for details), the time evolution is axiomatised in the following way.
The dynamics is encoded by a quantum Markov semigroup acting on a
von Neumann algebra $\mathcal{M}$ acting on the (separable) Hilbert space $\ch$, which is often $\mathcal{B}(\ch)$, the algebra of all bounded linear operators
acting on $\ch$. More explicitly, one has a family
$\{T_t: t\geq 0\}$ of normal completely positive unital maps on $\mathcal{M}$ satisfying  the semigroup property,
that is $T_0=I$ and $T_{t+s}=T_t\circ T_s$, for any $t, s\geq 0$.
The occurence of the decoherence then corresponds to having a decomposition of
$\mathcal{M}$ into a direct sum of the form
$\mathcal{M}=\mathcal{N}(T)\bigoplus \mathcal{M}_o$,
where $\mathcal{N}(T)$ is by definition the subalgebra
$$
\{x\in\mathcal{M}\mid T_t(x^*x)=T_t(x^*)T_t(x),\,T_t(xx^*)=T_t(x)T_t(x^*)\,\quad t\geq0\}
$$
often referred to as the multiplicative domain of $\{T_t: t\geq 0\}$,
and $\mathcal{M}_o:=\{x\in\mathcal{M}\mid\lim_{t\rightarrow\infty}\|T_t x\|=0\}$.

In \cite{CSU}, various necessary and sufficient conditions for decoherence to occur are given when
$\mathcal{M}=\mathcal{B}({\ch})$, for a finite dimensional Hilbert space $\ch$. Some relevant cases when $\ch$ is a separable infinite dimensional Hilbert space are also investigated, for the reader is referred to \cite{FSU} and the references cited therein.

One of the conditions for the decoherence is that the equality $\mathcal{N}(T)=V_o$ holds where $V_o$, which represents the persistent part under the evolution generated by the semigroup, is by definition the linear span  of the set
$\{x\in \mathcal{B}(\ch)\mid\mathcal{L}x=i\lambda x, \lambda\in\mathbb{R}\}$, with $\mathcal{L}$ being the generator of the semigroup.
As shown in \cite{CSU} ({\it e.g.} Section \ref{deco}, Example \ref{papy}), the inclusion $\mathcal{N}(T)\subset V_o$  may in general be strict, and
$V_o$ may even fail to be a subalgebra, in which case decoherence does not occur according to the usual definition. On the other hand, in such an example $V_o$ corresponds to the subspace of the invariant vectors. In addition, it is possible to define a new product in a natural way ({\it e.g.} \cite{K}) such that $T\lceil_{V_o}$ generates on $V_o$, equipped with this new $C^*$-product, a (trivial) conservative dynamics. Therefore, we feel it is interesting to point out that one can have cases where $V_o$ is not a subalgebra, but can nevertheless
be endowed with a new product that makes it into a $C^*$-algebra, see Section \ref{examples} for simple but nontrivial examples.

This raises the natural problem to decide under what conditions it is possible to redefine a product 
on $V_o$ which makes it into a $C^*$-algebra and which is compatible with the semigroup in
the sense that the semigroup itself now acts through $*$-automorphisms on $V_o$ with respect to the
new product.
The question can of course be asked for a single completely positive unital map without loosing anything concerning the conceptual aspects.

We address the question of the decoherence in a easier, and possibly more general form as follows. We start with a unital completely positive map $\Phi$ on a $C^*$-algebra 
$\ga$, and are seeking for a decomposition of $\ga$ into a topological direct sum of the form
$\ga_\infty\bigoplus\ga_o$ where $\ga_\infty$ is not necessarily a $C^*$-algebra, and
$\lim_{n\rightarrow\infty}\|\Phi^n(a)\|=0$ for any $a\in\ga_o$. Notice that the last is a strong ergodic property allowing the separation between transient and persistent part. 

Example \ref{exop} tells us that weaker conditions than the topological direct sum might be used to separate the persistent part from the transient one. Such conditions are still not well understood.

Concerning  some relevant ergodic properties associated to the peripheral spectrum, the reader is referred to \cite{F} and the reference cited therein.
Very recent developments of ergodic theory in the quantum setting, instead, can be found in 
\cite{Anzai1, Anzai2}.

Intuitively, the component $\ga_\infty$ representing the persistent part, is associated with the peripheral spectrum of $\Phi$,  $\sigma_{\rm ph}(\Phi):=\sigma(\Phi)\bigcap\mathbb{T}$, whereas
$\ga_o$, the transient part, is associated with the remaining part of the spectrum and does disappear after the repeated action of the dynamics.

It is then natural to see if $\ga_\infty$ can be endowed with a new product that turns it into a $C^*$-algebra.
In addition, it would also be desirable that
the restriction of $\Phi$ to $\ga_\infty$ should be a $*$-automorphism. 
Note that, at least in the finite dimensional case, when this is the case the isometry condition
$\|\F(a)\|=\|a\|$, $a\in\ga_\infty$ becomes a necessary condition for a splitting as above to exist, see Remark \ref{ism}.

In its full generality, though, this problem is much more complicated than it appears ({\it e.g.} Example \ref{exop}), and it is
beyond the scope of the present work.
Here, we limit ourselves to discussing the problem for gapped unital completely positive maps,
providing  some elementary cases for which the question is settled in the positive.

Among those, there is the case arising from finite dimensional dynamical systems satisfying all requirements. Indeed, in Section \ref{deco} we point out that, in all finite dimensional cases which are of course gapped, 
the persistent part, that is the direct sum of the eigenspaces of $\Phi$ corresponding to the peripheral eigenvalues,
can always be equipped with a new product that turns it into a $C^*$-algebra, in much the same spirit as the Choi-Effros
result for injective operator systems in \cite{CE}. In the general case, which at present is
out of the reach of our analysis, it might be possible that one can only recover
a Jordan algebra structure, for the Jordan structure of the selfadjoint part of a $C^*$-algebra is
actually all that matters for the description of the observables of a given quantum system, see {\it e.g.} \cite{EM}.

Since the approach we start developing in this paper
is mainly based on spectral theory,  the first thing to do is to study the spectral properties of
completely positive maps, the last being however a formidable task in the full generality. Section \ref{spbma} is indeed devoted to some basic properties about the spectrum of bounded maps acting on Banach which are an interest in itself.

Carrying out this programme, however, requires taking a step back to $*$-endomorphisms of
$C^*$-algebras, whose general spectral properties have not been addressed systematically, as far as we know, apart from
the commutative case, which is completely known in the unital case as an application of a general result due to
Sheffold in \cite{S}, see also \cite{W}. For such a purpose, at the end of Section \ref{sec2} we introduce a suitable generalisation of unitality for $*$-endomorphisms
between possibly non unital $C^*$-algebras which has a self-containing interest. In Section \ref{commucase}, we first describe how unital $*$-endomorphisms of abelian, not necessarily unital, $C^*$-algebras, can be described in terms of proper maps acting on the (locally compact) spectrum. This result enables us to
show that the spectrum of
a $*$-endomorphism is still of the type allowed by Sheffold's result on condition that the $*$-endomorphism is unital in our sense.

The paper ends with a section collecting some simple but illustrative examples for which the decoherence, according the approach followed in the present paper, occurs.

\section{Notations and preliminaries}\label{sec2}

We shortly recall some notions useful in the sequel, starting from the definition $\bt:=\{\l\in\bc\mid|\l|=1\}=\{e^{\imath\th}\mid \th\in[0,2\pi)\}$ of the unit circle of the complex plane. The closed unit disk is defined as
$\bd:=\{\l\in\bc\mid|\l|\leq1\}$. Obviously, $\partial\bd=\bt$.

For the normed space $\cx$, always complex, by $\cb(\cx)$ we denote the Banach algebra of all linear bounded operators acting on it. Without any further mention, we deal with complete normed spaces $\cx$ always different from $\{0\}$ if it is not otherwise specified, and with linear operators.

If $T\in\cb(\cx)$, we denote by $\s(T)\subset\bc$ the spectrum of $T$, which is a non empty compact subset of the complex numbers. The complement ${\rm P}(T):=\sigma(T)^c$ is the resolvent set, and 
$$
{\rm P}(T)\ni\l\mapsto R_T(\l):=(\l I-T)^{-1}\in\cb(\cx)
$$
the resolvent. The {\it spectral radius} ${\rm spr}(T)$ is defined by
$\max_{\l\in\s(T)}|\l|=\overline{\lim}_n\|T^n\|^{1/n}$, and satisfies ${\rm spr}(T)=\lim_n\|T^n\|^{1/n}$, see
{\it e.g.} \cite{Y}. One has
$0\leq{\rm spr}(T)\leq\|T\|$.
If $T\in\cb(\ch)$ is a normal operator acting on the Hilbert space $\ch$, then ${\rm spr}(T)=\|T\|$, but there are important classes of operators for which ${\rm spr}(T)<\|T\|$ already in the finite dimensional case. Indeed, if $T$ is a (generalised) nilpotent operator, $0={\rm spr}(T)<\|T\|$.
By $\sigma_{\rm ph}(T)$ we denote the peripheral spectrum of $T$, that is 
$$
\sigma_{\rm ph}(T):=\{\lambda\in\sigma(T): |\lambda|={\rm spr}(T)\}\,.
$$

If $\s_{\rm ph}(T)$ is separated by the remaining part of the spectrum ({\it i.e.} $\dist\big(\s_{\rm ph}(T),\s(T)\smallsetminus\s_{\rm ph}(T)\big)>0$), we say that $T$ has 
a {\it  mass gap} or it is {\it gapped}, where the last terminology is inherited from physics. Notice that if $\cx$ is finite dimensional, then each linear operator $T:\cx\to\cx$ is gapped.

The case of interest in this paper is when ${\rm spr}(T)\leq 1$. In such a situation, which is tacitly assumed henceforth, we continue to say that $T$ is  gapped whenever 
${\rm spr}(T)<1$. 

For the convenience of the reader, we also recall the definition of the {\it approximate point spectrum}, useful in the sequel. 
\begin{defin}
The approximate point spectrum of $T\in\cb(\cx)$ is the set $\s_{\rm ap}(T)$
of those $\lambda\in\bc$ for which there exists a sequence $(x_n)_{n\in\bn}\subset\cx$ with $\|x_n\|=1$, such that 
$\lim_{n}\|(T-\lambda I)x_n\|=0$.
\end{defin}

\smallskip

In the sequel, the Banach space $\cx$ will often be a 
$C^*$-algebra $\ga$, unital with identity $\idd\equiv I:=\idd_\ga$, but also the non unital case is treated in some detail. The maps $\F\in\cb(\ga)$ we deal with in the sequel are mainly 
unital completely positive linear maps where, for non unital  $C^*$-algebras and $*$-endomorphisms, we prove that a reasonable definition can be provided by using approximate unities.

We recall that, for maps $\F\in\cb(\ga)$ \emph{positive} means $\F(a^*a)\geq 0$ for any $a\in\ga$, whereas $\F$ is {\it completely positive}
if the map 
$$
\F_n:=\F\otimes\id_n:\bm_n(\ga)\to\bm_n(\ga)
$$ 
is positive for every $n\geq 1$. Recall that $\F_n$ is simply defined by $\F$ acting on the matrix $a\in \bm_n(\ga)$ as
$$
\F_n(a)_{ij}=\F(a_{ij})\,,\quad i,j=1,\dots,n\,.
$$

Notice that, for completely positive unital maps acting on unital $C^*$-algebras, $\|\F\|=\|\F(I)\|=1$, and thus ${\rm spr}(\F)=\|\F\|=1$. Similar results might be true for general completely positive maps between, in general non unital, $C^*$-algebras.
The $*$-homomorphisms between $C^*$-algebras, and in particular $*$-automorphisms, are all examples of completely positive maps.

An {\it operator system} is a
subspace $\mathcal{S}$ of a $C^*$-algebra $\ga$ which is stable under taking adjoints, that is $\cs^*\subset\cs$ (and therefore $\cs^*=\cs$), but is not necessarily norm closed. It is also supposed that $\idd_\ga\in\cs$, provided $\ga$ 
is unital.\footnote{In alternative, one might suppose that $\cs$ contains some approximate unity $(u_\iota)_\iota$ of $\ga$.}

\smallskip

We now extend the notion of  unital $*$-endomorphism for, possibly non unital, $C^*$-algebras. 
\begin{prop}
Let $\ga$ be a $C^*$-algebra with an approximate unity $(u_\iota)_{\iota\in I}\subset\ga$, and $\F:\ga\to\ga$ be a $*$-endomorphism.

Suppose that $\big(\F(u_\iota)\big)_{\iota\in I}$ is also an approximate unity for $\ga$. 
Then 
$\big(\F(v_\n)\big)_{\n\in N}$ is an approximate unity for $\ga$ for each approximate unity $(v_\n)_{\n\in N}$ of $\ga$.
\end{prop}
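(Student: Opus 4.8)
The plan is to reduce the claim to its genuine content, namely the convergence property. To show that a net $(\F(v_\n))_{\n\in N}$ of positive elements of norm at most one is an approximate unit, it suffices to establish $\|\F(v_\n)a-a\|\to0$ for every $a\in\ga$; the right-handed relation $\|a\F(v_\n)-a\|\to0$ then follows by applying this to $a^*$ and taking adjoints, since each $v_\n$ is positive and $\F$ preserves adjoints, so $\F(v_\n)$ is self-adjoint. The remaining structural requirements are automatic: as $\F$ is a $*$-homomorphism it is positive and contractive, whence $v_\n\ge0$, $\|v_\n\|\le1$ give $\F(v_\n)\ge0$, $\|\F(v_\n)\|\le1$, and monotonicity of the net, if one demands it, is preserved by positivity of $\F$. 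Thus everything comes down to the convergence $\F(v_\n)a\to a$.

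The main step is an interpolation estimate in which the given net $(\F(u_\iota))_\iota$, which by hypothesis is a genuine approximate unit for all of $\ga$, serves as a bridge. Fix $a\in\ga$ and $\eps>0$. First I would invoke the hypothesis to pick a \emph{single} index $\iota$ with $\|\F(u_\iota)a-a\|<\eps$, and then split
\[
\F(v_\n)a-a=\F(v_\n)\big(a-\F(u_\iota)a\big)+\big(\F(v_\n)\F(u_\iota)-\F(u_\iota)\big)a+\big(\F(u_\iota)a-a\big).
\]
The first summand is at most $\|\F(v_\n)\|\,\|a-\F(u_\iota)a\|\le\eps$ by contractivity, and the third is $<\eps$ by the choice of $\iota$. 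For the middle summand, multiplicativity gives $\F(v_\n)\F(u_\iota)-\F(u_\iota)=\F(v_\n u_\iota-u_\iota)$, so its norm is bounded by $\|v_\n u_\iota-u_\iota\|\,\|a\|$; since $(v_\n)_\n$ is an approximate unit and $u_\iota$ is now a \emph{fixed} element of $\ga$, one has $\|v_\n u_\iota-u_\iota\|\to0$ as $\n\to\infty$. Hence $\limsup_\n\|\F(v_\n)a-a\|\le2\eps$, and letting $\eps\to0$ closes the argument.

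The only delicate point, and the reason the statement is not entirely trivial, is the order of the quantifiers, which is precisely where the two hypotheses are used separately. The element $\F(u_\iota)$ must be frozen first, relying on $(\F(u_\iota))_\iota$ being an approximate unit for all of $\ga$ so that the third term is controlled for a suitable fixed $\iota$; only afterwards may one send $\n\to\infty$ to annihilate the middle term, exploiting that $(v_\n)_\n$ is an approximate unit acting on that one fixed $u_\iota$. Conceptually, the hypothesis forces the intrinsic identity $\F^{**}(\idd_{\ga^{**}})=\idd_{\ga^{**}}$ for the normal extension of $\F$ to the bidual, so that every approximate unit, converging weak$^*$ to $\idd_{\ga^{**}}$, is carried to a net again converging weak$^*$ to $\idd_{\ga^{**}}$; but upgrading this weak$^*$ statement to norm convergence on $\ga$ is exactly what the elementary three-term estimate accomplishes, which is why I would favour that direct route and omit the bidual.
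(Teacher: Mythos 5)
Your proof is correct and follows essentially the same route as the paper's: the identical three-term telescoping decomposition through a fixed $\F(u_{\iota_o})$, with the hypothesis used to freeze $\iota_o$ first and the approximate-unit property of $(v_\n)_\n$ acting on the fixed element $u_{\iota_o}$ used afterwards. The only cosmetic difference is that you dispatch the right-handed convergence by taking adjoints (using self-adjointness of $\F(v_\n)$) where the paper simply notes the symmetric argument; this changes nothing of substance.
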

\begin{proof}
It is enough to show that $\F(v_\n)a\to a$ for each $a\in\ga$, the limit of the product $a\F(v_\n)\to a$ being similar. 

Since $\big(\F(u_\iota)\big)_{\iota\in I}$ is supposed to be an approximate unity, for fixed $\eps>0$ there exist $\iota_o\in I$ such that
$\|\F(u_{\iota_o})a-a\|<\eps/2$. Then we have
\begin{align*}
\|\F(v_{\a})a-a\|\leq&\|\F(v_{\a})a-\F(v_{\a})\F(u_{\iota_o})a\|
+\|\F(v_{\a}u_{\iota_o})a-\F(u_{\iota_o})a\|\\
+&\|\F(u_{\iota_o})a-a\|
\leq\eps+\|a\|\|v_{\a}u_{\iota_o}-u_{\iota_o}\|\to\eps\,.
\end{align*}
The proof now follows as $\eps>0$ is arbitrary.
\end{proof}
The previous result justifies the following
\begin{defin}
\label{apu}
Let $\F$ be a $*$-endomorphism of the $C^*$-algebra $\ga$. It is said to be unital if $\big(\F(u_\iota)\big)_{\iota\in I}$ is an approximate unity for $\ga$ for some (and hence for all) approximate unity $(u_\iota)_{\iota\in I}$ of $\ga$.
\end{defin}
Suppose that $\F$ is a unital positive map acting on the unital $C^*$-algebra $\ga$. The pair $(\ga,\F)$ provides a (discrete) {\it $C^*$-dynamical system} in a natural way.\footnote{For the non unital $C^*$-algebras, the same holds true if $\F$ is a unital $*$-endomorphism. Dynamical systems for which the completely positive map $\F$ is not unital could be also considered.} It is said to be {\it Hamiltonian or conservative} if $\F$ is a $*$-automorphism of $\ga$, and {\it dissipative} otherwise. 

Given a strongly continuous ($C_o$ for short) one-parameter semigroup
$\{T(t): t\geq 0\}$ of completely positive maps,  for each $\d>0$ (which can be chosen to be 1 without harming generality) we obtain a discrete $C^*$-dynamical system $(\ga,\F_{A,\d})$ where 
$\F_{A,\d}:=T(\delta)$.  
Thanks to the Hille-Yoshida Theorem ({\it e.g.} \cite{Y}), the semigroup has a generator, that is $T(t)=e^{-tA}$, where $A$ is a closed densely defined  operator acting on $\ga$ with
$$
\s(A)\subset\{\l\in\bc\,|\,{\rm Re}(\l)\geq0\}\,.
$$

Since $\F_{A,\d}=e^{-\delta A}$, its peripheral spectrum corresponds to
 $$
\s(A)\bigcap\{\l\in\bc\,|\,{\rm Re}(\l)=0\}\,,
$$
and the gapped case occurs precisely when 
$$
\dist\big(\s(A)\bigcap\{\l\in\bc\,|\,{\rm Re}(\l)>0\},\{\l\in\bc\,|\,{\rm Re}(\l)=0\}\big)>0\,.
$$
In brief, $\s(\F_{A,\d})\subset\bd$ and $A$ is gapped if and only if the $\F_{A,\d}$ are gapped. 

The above considerations about the semigroups clearly explain why $\s(\F)\subset\bt$ corresponds to the Hamiltonian case. 

\medskip

We end with the following well-known notion of classical ergodic theory. Indeed, let $X$ be a locally compact, possibly non compact, Hausdorff space, and $\t:X\to X$ a proper map. The unital $*$-endomorphism $\F:C_o(X)\to C_o(X)$ defined by $\F(f):=f\circ\t$, $f\in C_o(X)$, is named
{\it the Koopman operator} associated to $\t$, see Section \ref{commucase} for details.

\section{Some spectral properties of bounded linear maps}
\label{spbma}

In the present section we collect some fundamental spectral properties of bounded linear maps, perhaps known to the experts, which will be useful in the sequel.
 
 \medskip
 
Let $T\in\cb(\cx)$ with ${\rm spr}(T)\leq 1$ be a gapped operator. Then there is a Jordan curve $\g\in {\rm P}(T)$ surronding the part of the spectrum inside $\stackrel{o}{\bd}$, corresponding to the whole spectrum if ${\rm spr}(T)<1$. Therefore, we can define a continuous projection $Q\in\cb(\cx)$, which does not depend on the chosen curve satisfying the conditions required above, as
\begin{equation}
\label{gaap}
Q:=\frac1{2\pi\imath}\rcircleleftint_\g R_T(\l)\di\l\,,\quad P:=I-Q\,.
\end{equation}
We get that $\cx=P\cx\bigoplus Q\cx$ is the topological inner direct sum of $P\cx$ and $Q\cx$. Obviously, if ${\rm spr}(T)<1$ then $Q=I$ and thus $P=0$.
Conversely, if ${\rm spr}(T)=1$ and $\s(T)=\s_{\rm ph}(T)$, then $Q=0$ and thus $P=I$.
\begin{prop}
\label{ergorisultato}
Let $T\in\cb(\cx)$ be a gapped operator with ${\rm spr}(T)\leq1$. Then $\lim_{n\rightarrow +\infty} T^nQ=0$ in norm.
\end{prop}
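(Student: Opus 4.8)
The plan is to exploit that $Q$ is the Riesz (spectral) projection onto the part of the spectrum lying strictly inside the unit disk, and that on the invariant subspace $Q\cx$ the operator $T$ has spectral radius strictly less than $1$, so that its powers decay geometrically.

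First I would record the two structural facts that make the argument work. Since $T$ commutes with each resolvent $R_T(\l)$, the integral representation \eqref{gaap} shows that $Q$ commutes with $T$; hence $Q\cx$ is $T$-invariant and $T^nQ=QT^nQ$ for every $n$. It therefore suffices to control the restriction $S:=T\lceil_{Q\cx}\in\cb(Q\cx)$, because for each $x\in\cx$ one has $T^nQx=S^n(Qx)$ and consequently $\|T^nQ\|\le\|Q\|\,\|S^n\|$.

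Next I would locate $\sigma(S)$. By the Riesz decomposition already used to produce the splitting $\cx=P\cx\bigoplus Q\cx$, the spectrum of the restriction to $Q\cx$ is exactly the enclosed part, $\sigma(S)=\sigma(T)\cap\stackrel{o}{\bd}$. Since $T$ is gapped with ${\rm spr}(T)\le1$, this set is separated from $\bt$, so $r:=\max\{|\l|:\l\in\sigma(T)\cap\stackrel{o}{\bd}\}<1$ and thus ${\rm spr}(S)=r<1$. The spectral radius formula ${\rm spr}(S)=\lim_n\|S^n\|^{1/n}$ then yields $\|S^n\|\le\rho^n$ eventually for any fixed $\rho\in(r,1)$, so $\|S^n\|\to0$ geometrically and $\|T^nQ\|\le\|Q\|\,\|S^n\|\to0$, as required.

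A more self-contained variant, which I would include to avoid appealing to the spectral mapping for $S$, estimates the defining integral directly. The annulus $r<|\l|<1$ contains no spectrum (the inner part has modulus $\le r$, the peripheral part modulus $1$), so the circle $|\l|=\rho$ with $r<\rho<1$ lies in ${\rm P}(T)$ and may be taken as $\g$; the holomorphic functional calculus then gives $T^nQ=\frac1{2\pi\imath}\int_\g\l^nR_T(\l)\,\di\l$, whence $\|T^nQ\|\le\rho^n\,\frac{{\rm length}(\g)}{2\pi}\sup_{\l\in\g}\|R_T(\l)\|=C\rho^n\to0$. The only step needing real care in either route is this functional-calculus identity (equivalently, the identification $\sigma(S)=\sigma(T)\cap\stackrel{o}{\bd}$): both are standard Riesz--Dunford facts resting on the commutation of $Q$ with $T$ and on the resolvent identity, so I expect the main obstacle to be purely expository rather than substantive. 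The degenerate cases noted after \eqref{gaap} are automatic --- $Q=I$ with $r={\rm spr}(T)<1$, and $Q=0$ (so that $T^nQ=0$) when $\sigma(T)=\sigma_{\rm ph}(T)$.
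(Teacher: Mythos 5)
Your proposal is correct, and it in fact contains two arguments, one of which differs from the paper's. Your primary route is the genuinely different one: you use the Riesz decomposition theorem to identify $\s\big(T\lceil_{Q\cx}\big)=\s(T)\bigcap\stackrel{o}{\bd}$, observe that gappedness forces $r:={\rm spr}\big(T\lceil_{Q\cx}\big)<1$, and conclude via the spectral radius formula that $\|T^nQ\|\leq\|Q\|\,\big\|\big(T\lceil_{Q\cx}\big)^n\big\|$ decays geometrically. The paper instead works directly with the contour integral: it proves the identity $T^nQ=\frac1{2\pi\imath}\int_\g\l^nR_T(\l)\,\di\l$ by induction on $n$, using $TR_T(\l)=\l R_T(\l)-I$ together with Cauchy's theorem to discard the $\l^nI$ term, and then estimates $\|T^nQ\|\leq r^{n+1}\max_{|\l|=r}\|R_T(\l)\|\to0$ along a circle of radius $r<1$. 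Your ``self-contained variant'' is precisely this second argument, except that you cite the holomorphic functional calculus for the identity that the paper verifies by hand. As for what each approach buys: your restriction argument isolates the conceptual mechanism (spectral radius strictly below $1$ on the invariant subspace $Q\cx$) and gives decay at rate $\rho^n$ for every $\rho$ above the inner spectral radius, at the cost of invoking the full Riesz spectral-mapping machinery for restrictions; the paper's computation is more elementary and self-contained, resting only on the resolvent identity and Cauchy's theorem, and produces an explicit constant. One small imprecision you should fix: in the variant you justify that the annulus $r<|\l|<1$ is free of spectrum by saying the peripheral part has modulus $1$; that is only accurate when ${\rm spr}(T)=1$, whereas for ${\rm spr}(T)<1$ the whole spectrum is ``inner'' and $Q=I$ --- the conclusion still holds in both cases, but the parenthetical should distinguish them, as the paper's conventions do.
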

\begin{proof}
We start by showing the following identity
\begin{equation}
\label{isd}
T^nQ = \frac1{2\pi\imath}\rcircleleftint_\g \l^nR_T(\l)\di\l\,,
\end{equation}
which holds for any natural number $n$. We will argue by induction on $n$. As the Jordan curve $\g$, we can choose the circle centred at $0$ of radius $r$, for some $r<1$.

We recall the following well-known identity
$$
TR_T(\l)=\l R_T(\l)-I\,,
$$
holding on ${\rm P}(T)$ by holomorphic functional calculus. 
If $n=0$, we just have \eqref{gaap}. Suppose that \eqref{isd} holds true for some integer $n$. We get
\begin{align*}
T^{n+1} Q = &T(T^n Q) = \frac1{2\pi\imath}\rcircleleftint_\g \l^nTR_T(\l) \di\l \\
= &\frac1{2\pi\imath}\rcircleleftint_\g\big(\l^{n+1}R_T(\l)-\l^nI\big)\di\l\\
=&\frac1{2\pi\imath}\rcircleleftint_\g\l^{n+1}R_T(\l)\di\l\,,
\end{align*}
where the last equality follows by Cauchy's formula.
The conclusion can now be easily achieved, for we have the estimate
\begin{align*}
\|T^nQ\| &\le \frac{1}{2 \pi}\int_{\gamma}\|\l^n R_T(\l)\| \di\l
 \leq |r|^{n+1}\max_{|\l|= r} \|R_T(\l)\| \xrightarrow[n \to +\infty]{}0\,.
\end{align*}
\end{proof}
For gapped operators $T\in\cb(\cx)$, as soon as we have selected the elements $x\in\cx$ corresponding to the "spectral subspace" associated to $\s(T)\bigcap\stackrel{o}{\bd}$ ({\it cf.} 
Proposition \ref{ergorisultato}), it is very natural to address the same problem for non-gapped operators. In order to avoid trivial cases, we first analyse the situation for which $T$ is gapped and ${\rm spr}(T)=1$. For the spectrum, we have 
$$
\s(T)=\s_{\rm ph}(T)\bigcup\big(\s(T)\bigcap\stackrel{o}{\bd}\big)\,,
$$
and in addition $\s_{\rm ph}(T)\neq\emptyset$. Therefore, for $T_P:=TP$, $\s(T_P)=\s_{\rm ph}(T)$ and $T_P$ is invertible. Taking into account that, if $x=Px$ then 
$(T_P)^{-n}x=T^{-n}x$ (where $T^{-n}$ is defined via its own domain), we easily get for $x=Px$,
$$
R_T(\l)x=-\sum_{n=0}^{+\infty}\l^nT^{-(n+1)}x\,,
$$
and the power-series converges at least for $\l<1$. Therefore, we conclude that 
$$
x\in{\rm Ran}(P)\Rightarrow\overline{\lim}_n\|T^{-n}x\|^{1/n}\leq1\,.
$$

By passing to the general situation of $T\in\cb(\cx)$, with the unique additional condition that $0\in{\rm P}(T)$, we obtain in analogy to the above considerations,
\begin{equation}
\label{pse}
R_T(\l)x=-\sum_{n=0}^{+\infty}\l^nT^{-(n+1)}x\,,
\end{equation}
and the series in \eqref{pse} converges at least in the disk $\{\l\in\bc\mid|\l|<\dist(0,\s(T))\}$.

For $r\geq\dist(0,\s(T))$, consider the circle $C_r:=\{\l\in\bc\mid|\l|=r\}$ together with 
$$
\cx_{T,r}:=\{x\in\cx\mid\overline{\lim}_n\|T^{-n}x\|^{1/n}\leq 1/r\} \,.
$$

It is easy to show that $\cx_{T,r}\subset\cx$ is a closed linear subspace. Indeed, it is closed by multiplication by the scalars. In addition, $x,y\in\cx_{T,r}$,
$R_T(\l)(x+y)=R_T(\l)x+R_T(\l)y$ and the series describing $R_T(\l)(x+y)$ converges at least 
$$
|\l|<\min\bigg\{\frac1{\overline{\lim}_n\|T^{-n}x\|^{1/n}},\frac1{\overline{\lim}_n\|T^{-n}y\|^{1/n}}\bigg\}\,.
$$
Therefore,
$$
\frac1{\overline{\lim}_n\|T^{-n}(x+y)\|^{1/n}}\geq\min\bigg\{\frac1{\overline{\lim}_n\|T^{-n}x\|^{1/n}},\frac1{\overline{\lim}_n\|T^{-n}y\|^{1/n}}\bigg\}\,,
$$
which leads to 
$$
\overline{\lim}_n\|T^{-n}(x+y)\|^{1/n}
\leq\max\bigg\{\overline{\lim}_n\|T^{-n}x\|^{1/n},\overline{\lim}_n\|T^{-n}y\|^{1/n}\bigg\}\leq1/r\,.
$$
Suppose that $(x_k)_{k\in\bn}\subset\cx_{T,r}$, and $x_k\to x$. By exchanging the symbols of the sum and the limit, we see that $R_T(x)$ converges at least for
$|\l|<\inf_n\big\{1/\overline{\lim}_n\|T^{-n}(x+y)\|^{1/n}\big\}$
which, as before, leads to $\overline{\lim}_n\|T^{-n}x\|^{1/n}\leq1/r$. Therefore, $x\in\cx_{T,r}$.

Now we consider the case of interest, that is a not necessarily gapped $T\in\cb(\cx)$, such that ${\rm spr}(T)=1$ and $0\in{\rm P}(T)$. The above considerations suggest that 
the closed subspace $\cx_{T,1}$ might characterise the "spectral subspace" associated to $\s_{\rm ph}(T)$. Certainly, $0\in\cx_{T,1}$, but it is still unclear under what conditions, which  should be as minimal as possible, $\cx_{T,1}$ does describe the spectral subspace associated with $\s_{\rm ph}(T)\neq\emptyset$.

Another possible characterisation of the spectral subspace we are searching for the cases of interests in the present paper, that is when $\F\in\cb(\ga)$ is a unital completely positive map acting on the $C^*$-algebra $\ga$, is $\|\F(x)\|=\|x\|$, see Remark \ref{ism}.

\medskip

We now review the main properties of the approximate point spectrum.
\begin{prop}
\label{approximatepoint}
If $T\in\cb(\cx)$, then:
\begin{itemize}
\item[(i)] $\sigma_{\rm ap}(T)$ is a non empty closed subset of $\sigma(T)$,
\item[(ii)] $\partial \sigma(T) \subset \sigma_{\rm ap}(T)$.
\end{itemize}
\end{prop}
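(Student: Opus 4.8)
The plan is to prove the two assertions in turn, both of which rest on a single elementary fact: a bounded operator $S\in\cb(\cx)$ fails to be bounded below (i.e.\ there is no $c>0$ with $\|Sx\|\geq c\|x\|$ for all $x$) precisely when $0\in\s_{\rm ap}(S)$. Applying this to $S=T-\l I$, we have $\l\in\s_{\rm ap}(T)$ if and only if $T-\l I$ is not bounded below.

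For part (i), the inclusion $\s_{\rm ap}(T)\subset\s(T)$ is the easy direction: if $\l\notin\s(T)$ then $T-\l I$ is invertible, so $\|(T-\l I)x\|\geq\|R_T(\l)\|^{-1}\|x\|$ for all $x$, which forbids the existence of a sequence of unit vectors on which $(T-\l I)x_n\to 0$. Closedness of $\s_{\rm ap}(T)$ I would obtain by showing its complement is open: if $\l_0\notin\s_{\rm ap}(T)$, then $T-\l_0 I$ is bounded below by some $c>0$, and for $|\l-\l_0|<c/2$ the perturbation estimate $\|(T-\l I)x\|\geq\|(T-\l_0 I)x\|-|\l-\l_0|\,\|x\|\geq (c/2)\|x\|$ keeps $T-\l I$ bounded below, so $\l\notin\s_{\rm ap}(T)$. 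Non-emptiness then follows from part (ii), since $\s(T)$ is a non-empty compact set and hence has non-empty boundary, and $\partial\s(T)\subset\s_{\rm ap}(T)$.

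For part (ii), I would take $\l\in\partial\s(T)$ and produce a sequence of unit vectors witnessing $\l\in\s_{\rm ap}(T)$. Since $\l$ is a boundary point of the spectrum, there is a sequence $\l_n\in\P(T)$ with $\l_n\to\l$. The key analytic input is that the resolvent blows up as one approaches the boundary of the spectrum, namely $\|R_T(\l_n)\|\to+\infty$; this is standard, but should be recalled, and it is the one place where I expect to have to argue carefully. Given this, choose unit vectors $y_n$ with $\|R_T(\l_n)y_n\|\geq \tfrac12\|R_T(\l_n)\|$, and set $x_n:=R_T(\l_n)y_n/\|R_T(\l_n)y_n\|$, which are unit vectors. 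Then
\begin{align*}
\|(T-\l I)x_n\|
&\leq\|(T-\l_n I)x_n\|+|\l_n-\l|\,\|x_n\|\\
&=\frac{\|y_n\|}{\|R_T(\l_n)y_n\|}+|\l_n-\l|
\leq\frac{2}{\|R_T(\l_n)\|}+|\l_n-\l|\xrightarrow[n\to\infty]{}0\,,
\end{align*}
so $\l\in\s_{\rm ap}(T)$, as required.

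The main obstacle is establishing $\|R_T(\l_n)\|\to+\infty$ for $\l_n\to\l\in\partial\s(T)$ from within $\P(T)$. The cleanest route is the contrapositive: if $\|R_T(\l_n)\|$ stayed bounded along some subsequence, then for $\l_n$ close enough to $\l$ the Neumann series $R_T(\l)=\sum_k(\l_n-\l)^k R_T(\l_n)^{k+1}$ would converge and show $\l\in\P(T)$, contradicting $\l\in\s(T)$. This is the only genuinely non-routine step; everything else is the perturbation bookkeeping above.
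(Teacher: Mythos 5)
Your proposal is correct and follows essentially the same route as the paper: part (i) via the identical bounded-below/perturbation argument showing the complement of $\s_{\rm ap}(T)$ is open, and part (ii) by normalizing resolvent images of near-maximizing vectors, using the blow-up of $\|R_T(\l)\|$ near $\partial\s(T)$. The only difference is cosmetic: the paper invokes the resolvent blow-up (in the form $1/\dist(\mu,\s(T))\leq\|R_T(\mu)\|$) as a well-known fact, whereas you supply its standard Neumann-series proof, making your version slightly more self-contained.
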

\begin{proof}
(i) We start by noticing that $\sigma_{\rm ap}(T)\neq\emptyset$ easily follows by (ii). If now $\lambda \notin \sigma(T)$, then $T-\lambda I$ admits a bounded inverse by the Bounded Inverse Theorem. In particular,
$$
\|(T-\lambda I)x\| \geq\frac{\|x\|}{\|(T-\lambda I)^{-1}\|}\,.
$$
This means that $T-\lambda I$ is bounded from below, and hence $\s_{\rm ap}(T) \subset \sigma(T)$. Fix now $\l_o\in\s_{\rm ap}(T)^{\rm c}$. We have as above,
$\|(T-\lambda_o I)x\| \geq\|x\|/\|(T-\lambda_o I)^{-1}\|$. Consider now the open ball $B$ centered in $\l_o$ of radius $\|(T-\lambda I_o)^{-1}\|^{-1}>0$, and fix $\l\in B$. We get
\begin{equation*}
\begin{split}
\|Tx-\lambda x\| &= \|Tx-(\lambda - \lambda_o)x - \lambda_o x\|\\&\geq \, \|Tx - \lambda_o x\| - |\lambda - \lambda_o|\|x\|\\ &\geq c\|x\|\,,
\end{split}
\end{equation*}
with $c:=\|(T-\lambda I_o)^{-1}\|^{-1}-|\lambda - \lambda_o|>0$.
Thus $\sigma_{\rm ap}(T)$ has an open complement and hence is closed.

(ii) Let $\lambda \in \partial \sigma(T)$ and $\varepsilon > 0$. Since $\sigma(T)$ and $\mathbb{C} \smallsetminus \sigma(T)$ have the same boundary, there is $\mu \notin \sigma(T)$ such that $|\lambda - \mu| < \frac{\varepsilon}{2}$. We have
\begin{equation*}
\frac{2}{\varepsilon} \le \frac{1}{\dist(\mu,\s(T))} \le \|(T-\mu I)^{-1}\| \,,
\end{equation*}
since it is a well-known fact that $\|R_T(\lambda)\| \rightarrow \infty$ as $\dist(\mu,\s(T)) \rightarrow 0$.
Therefore, there is a $x \in\cx$ such that $\|x\| = 1$ and 
$1/\varepsilon \leq \|(T-\mu I)^{-1}x\|$.

Let $y = \frac{1}{\|(T-\mu I)^{-1}x\|}(T-\mu I)^{-1}x$, then $\|y\| = 1$ and
$\|(T-\lambda I)y - (T- \mu I)y\| < \varepsilon/2$. It follows that 
\begin{align*}
\|(T-\lambda I)y\| &\le \|(T-\lambda I)y - (T- \mu I)y\| + \|(T-\mu I)y\|\\
& < \frac{\varepsilon}{2} + \frac{\|x\|}{\|(T-\mu I)^{-1}\|}
 < \frac{3\varepsilon}{2}\,,
\end{align*}
and hence it results $\lambda \in \sigma_{\rm ap}(T)$. 
\end{proof}
We are now in a position to prove a useful result concerning an isometry $T\in\cb(\cx)$, which is known to the experts but seems to be difficult to find in the literature.
The final part of the proof that we provide has actually been taken from \cite{SpectrumIsometry}.
All we do is add some few details on the missing initial part.
\begin{thm}
\label{isospectrum}
Let $\cx$ be a Banach space and $T \in \cb(\cx)$ an isometry. Then, either of the two conditions holds true:
\begin{itemize}
\item[(i)] if $T$ is surjective, then $\sigma(T) \subset \mathbb{T}$;
\item[(ii)] if $T$ is not surjective, then $\sigma(T)=\mathbb{D}$.
\end{itemize}
\end{thm}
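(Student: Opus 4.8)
The plan is to analyse the two cases by exploiting the isometry condition $\|Tx\|=\|x\|$ together with the general spectral results already established, in particular Proposition \ref{approximatepoint} on the approximate point spectrum. Throughout, the key observation is that an isometry is automatically bounded below (with constant $1$), so for any $\lambda$ the operator $T-\lambda I$ fails to be bounded below only when $|\lambda|$ is small enough to permit cancellation; more precisely, $\|(T-\lambda I)x\|\geq\|Tx\|-|\lambda|\|x\|=(1-|\lambda|)\|x\|$, which shows immediately that for $|\lambda|<1$ one has $\lambda\notin\sigma_{\rm ap}(T)$. Combined with part (ii) of Proposition \ref{approximatepoint}, namely $\partial\sigma(T)\subset\sigma_{\rm ap}(T)$, this forces the \emph{boundary} of the spectrum into the region $\{|\lambda|\geq 1\}$. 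Since also $\|T\|=1$ gives ${\rm spr}(T)\leq 1$ and hence $\sigma(T)\subset\bd$, the boundary $\partial\sigma(T)$ must lie in $\bt$.

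For case (i), suppose $T$ is surjective. Being a surjective isometry, $T$ is invertible with bounded inverse (the inverse is again an isometry on its range, hence bounded), so $0\in{\rm P}(T)$; more generally one checks that whenever $|\lambda|<1$ the operator $T-\lambda I$ is invertible. I would argue this via a Neumann-type series: writing $T-\lambda I=T(I-\lambda T^{-1})$ and noting $\|\lambda T^{-1}\|=|\lambda|\,\|T^{-1}\|=|\lambda|<1$ (since $T^{-1}$ is also an isometry), the factor $I-\lambda T^{-1}$ is invertible and so is $T-\lambda I$. Thus $\sigma(T)\cap\{|\lambda|<1\}=\emptyset$, and combined with $\sigma(T)\subset\bd$ we conclude $\sigma(T)\subset\bt$.

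For case (ii), suppose $T$ is not surjective. Then ${\rm Ran}(T)$ is a proper closed subspace (closed because $T$ is bounded below), so $0\in\sigma(T)$; indeed $T$ is injective but not invertible. The strategy is to show the whole open disk belongs to $\sigma(T)$, whence by taking closures $\bd\subset\sigma(T)$ and, together with $\sigma(T)\subset\bd$, equality follows. For this I would fix $|\lambda|<1$ and show $T-\lambda I$ is not surjective, so that $\lambda\in\sigma(T)$: if it were surjective it would be invertible (it is already bounded below by $(1-|\lambda|)$), and one can use a connectedness/continuity argument on $\{|\lambda|<1\}$ together with the fact that the set of invertible elements is open and that $0$ is non-invertible. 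Concretely, the resolvent set is open, the disk $\{|\lambda|<1\}$ is connected, and the boundary $\bt$ already lies outside the open disk; if some point of the open disk were in ${\rm P}(T)$ while $0\in\sigma(T)$, this would contradict the structure forced by $0$ being a non-surjective (hence spectral) point. I expect this case to be the main obstacle: showing that $0\in\sigma(T)$ propagates to fill the entire open disk requires more than boundedness below, and the cleanest route is to verify directly that $(T-\lambda I)$ has range of the same ``deficiency'' as $T$ for all $|\lambda|<1$, using the invariance of the Fredholm index (or the deficiency index) under the small perturbation $-\lambda I$. This index-stability argument, rather than the elementary estimates of the surjective case, is where the real content lies and is presumably the portion the authors cite from \cite{SpectrumIsometry}.
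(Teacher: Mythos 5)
Your proposal is correct and rests on essentially the same two pillars as the paper's proof: Proposition \ref{approximatepoint}(ii) (the topological boundary of the spectrum lies in the approximate point spectrum) together with the lower bound $\|(T-\lambda I)x\|\geq \|Tx\|-|\lambda|\|x\|=(1-|\lambda|)\|x\|$, which expels the open disk from $\sigma_{\rm ap}(T)$ and hence forces $\partial\sigma(T)\subset\mathbb{T}$. Your treatment of case (i) via the factorisation $T-\lambda I=T(I-\lambda T^{-1})$ and a Neumann series is a harmless variant of the paper's argument, which instead uses $\sigma(T^{-1})=1/\sigma(T)$ and the fact that the inverse isometry has spectral radius $1$; both are equally elementary.

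The one correction concerns your closing paragraph: no Fredholm or deficiency-index stability is needed, and your worry that filling the open disk ``requires more than boundedness below'' is unfounded --- you already hold all the pieces. Indeed, you have $0\in\sigma(T)$ (as $T$ is injective with proper closed range) and $\partial\sigma(T)\subset\mathbb{T}$. The set $\sigma(T)\cap\stackrel{o}{\mathbb{D}}$ is relatively closed in the open disk, and it is also relatively open there, since any of its points lying on $\partial\sigma(T)$ would have to belong to $\mathbb{T}$; as the open disk is connected and this set contains $0$, it must be the whole open disk, whence $\sigma(T)=\mathbb{D}$ by compactness of the spectrum. Equivalently, and this is exactly how the paper phrases it: if $\sigma(T)\subsetneq\mathbb{D}$, a segment from $0\in\sigma(T)$ to a resolvent point inside the disk would cross $\partial\sigma(T)$ at a point of modulus $<1$, which would then be an approximate eigenvalue of modulus $<1$ --- impossible for an isometry. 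Incidentally, your explicit observation that $0\in\sigma(T)$ is a detail the paper leaves implicit in this proof (it appears only later, in Remark \ref{rut}), yet it is needed to guarantee that $\sigma(T)$ meets the open disk at all; so on this point your write-up is actually the more careful of the two.
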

\begin{proof}
We begin to handle the first case. First observe that the spectrum of
$T$ is contained in $\mathbb{D}$ as $\|T\|=1$.
Now, if there were $\lambda \in \sigma(T)$ with $|\lambda| < 1$,  there would exist $\mu \in \sigma(T^{-1})$ with $|\mu| > 1$ given that $\sigma(T^{-1})= \frac{1}{\sigma(T)}$. This would imply
that the spectral radius of $T^{-1}$ should be strictly greater than $1$, which is impossible because $T^{-1}$ is still an isometry.

For the second case, if $\s(T)\subsetneq\bd$, then it would exist $\l\in\stackrel{o}{\bd}\bigcap\partial\s(T)$. 
By Proposition (\ref{approximatepoint}), $\l\in\sigma_{\rm ap}(T)$, and thus there exists a sequence $(x_i)_i \subset\cx$ such that $\|x_i\| = 1$ for all $i$, and $\|Tx_i - \lambda x_i\| \rightarrow 0$ as $i \rightarrow \infty$. Since $T$ is an isometry, $\|x_i\|=1$ for all $i$, and $|\l|<1$, 
$$
\|Tx_i - \lambda x_i\| \geq \|Tx_i\|-|\lambda|\|x_i\| = \|x_i\| - |\lambda| \|x_i\|=1-|\l|>0\,,\quad i\in\bn\,,
$$
which is a contradiction.
\end{proof}
For an isometry $V$ acting on the Hilbert space $\ch$, the same conclusion can be reached by a simple application of \textit{Wold's decomposition}, which states that 
$$
\ch\cong\ch_o\bigoplus_{\iota\in I}\ell^2(\bn)\,,\quad V\cong U_o\bigoplus_{\iota\in I} S\,,
$$
for some index-set $I$. Here, $S$ is the forward shift acting on $\ell^2(\bn)$, and $U_o$ is a unitary operator ({\it i.e.} $U_oU^*_o=\idd_{\ch_o}=U^*_oU_o$) acting on the Hilbert 
space $\ch_o$. The trivial case corresponds to $\ch_o=\{0\}$ and $I=\emptyset$. The nontrivial unitary case corresponds to $\ch_o\neq\{0\}$ and $I=\emptyset$, in which case 
$\s(V)\subset\bt$. The non-unitary case corresponds to $\iota_o\in I\neq\emptyset$, in which case $\bd=\s(S)\subset\s(V)\subset\bd$.

Since injective $*$-homomorphisms between $C^*$-algebras are automatically isometric, the following result
is a straightforward application of Theorem \ref{isospectrum}. 

A $C^*$-algebra is {\it simple} if it contains no closed two-sided ideals apart from ${0}$ and itself. Since for a $*$-endomorphism $\F$, $\ker(\F)$ is a closed two-sided ideal, we deduce that a  (non null) $*$-endomorphism of a simple $C^*$-algebra is always injective and thus an isometry, see {\it e.g.} \cite{T}.
\begin{cor}
\label{sime}
If $\Phi$ is an injective $*$-endomorphism of a $C^*$-algebra $\ga$, then $\sigma(\F)$ is either contained
in $\bt$, which happens when $\F$ is surjective, equivalently when $\F$ is an automorphism of $\ga$, or $\sigma(\Phi)=\mathbb{D}$.

Therefore, if $\ga$ is simple and $\F$ is a nontrivial $*$-endomorphism, we have:
\begin{itemize} 
\item[(i)] $\s(\F)=\bd\iff{\rm Ran}(\F)\neq\ga$,
\item[(ii)] $\s(\F)\subset\bt\iff{\rm Ran}(\F)=\ga$.
\end{itemize}
\end{cor}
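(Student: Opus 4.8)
The plan is to read the corollary as a direct translation of Theorem \ref{isospectrum} into the $C^*$-setting, the only preliminary step being to recognise that $\F$ is an isometry. Since $\F$ is an injective $*$-homomorphism of $\ga$ into itself, the fact recalled immediately above the statement guarantees that $\F$ is isometric; hence $\F\in\cb(\ga)$ is an isometry of the Banach space $\ga$. Theorem \ref{isospectrum} then applies verbatim and yields the dichotomy: if $\F$ is surjective then $\s(\F)\subset\bt$, while if $\F$ is not surjective then $\s(\F)=\bd$. Note that the two alternatives are genuinely exclusive, since $\s(\F)=\bd$ comprises the whole disk whereas $\s(\F)\subset\bt$ confines the spectrum to the boundary circle.

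It remains to identify surjectivity with the automorphism property. As $\F$ is already injective, it is bijective precisely when it is surjective, and a bijective $*$-homomorphism is automatically a $*$-automorphism, its set-theoretic inverse being again a $*$-homomorphism; conversely, every $*$-automorphism is surjective. This establishes the first assertion together with the parenthetical equivalence ``$\F$ surjective $\iff$ $\F$ automorphism''.

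For the second part I would invoke simplicity to reduce to the injective case already treated. Assuming $\ga$ simple and $\F$ a nontrivial, i.e. non-null, $*$-endomorphism, the kernel $\ker(\F)$ is a closed two-sided ideal, so by simplicity it equals either $\{0\}$ or the whole of $\ga$; since $\F\neq0$, the latter is excluded, whence $\F$ is injective and the first part applies. Reading off the dichotomy through the surjectivity criterion then gives $\s(\F)=\bd\iff{\rm Ran}(\F)\neq\ga$ and $\s(\F)\subset\bt\iff{\rm Ran}(\F)=\ga$, which are exactly (i) and (ii).

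The argument is essentially a bookkeeping exercise, so I do not anticipate a genuine obstacle. The only points deserving attention are the verification that a bijective $*$-homomorphism is an automorphism, so that surjectivity and the automorphism property coincide, and the observation that simplicity forces injectivity of any non-null $*$-endomorphism; both are standard once Theorem \ref{isospectrum} and the automatic isometry of injective $*$-homomorphisms are in hand.
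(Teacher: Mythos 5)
Your proposal is correct and follows essentially the same route as the paper: the automatic isometry of injective $*$-homomorphisms combined with Theorem \ref{isospectrum}, the identification of surjectivity with the automorphism property for injective maps, and the use of simplicity (via $\ker(\F)$ being a closed two-sided ideal) to force injectivity of a non-null endomorphism. The paper presents exactly this reasoning in the remarks preceding the corollary rather than in a formal proof environment, so there is nothing to add.
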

\begin{rem}
\label{rut}
If a $*$-endomorphism $\Phi$ of a $C^*$-algebra $\ga$ is not surjective, then $0\in\s(\F)$. If in addition it is injective, it is an isometry and thus $\s(\F)=\bd$. If $\F$ is not unital, then $\F$ is not surjective, and thus $\s(\F)=\bd$ whenever it is injective.
\end{rem}
\begin{proof}
By Corollary \ref{sime}, we only have to show that if $\F$ is not unital ({\it cf.} Definition \ref{apu}) then it cannot be surjective. Indeed,
if $(u_\iota)_{\iota\in I}\in\ga$ is any approximate unity, then  $\big(\F(u_\iota)\big)_{\iota\in I}$ is no longer an approximate
unity. Then there exists some $b\in\ga$ such that either $\big(\F(u_\iota)b\big)_{\iota\in I}$ or
$\big(b\F(u_\iota)\big)_{\iota\in I}$ does not converge to $b$. We only deal with the former case, 
 the latter being completely analogous.

Indeed, suppose that $\F$ is surjective, and thus there exists $a\in\ga$ such that $\F(a)=b$. We get
\begin{align*}
b=&\F(a)=\F\big(\lim_\iota(u_\iota a)\big)=\lim_\iota\F(u_\iota a)\\
=&\lim_\iota\big(\F(u_\iota)\F(a)\big)=\lim_\iota\big(\F(u_\iota)b\big)\,,
\end{align*}
which is a contradiction.
\end{proof}
The problem of determining what the spectrum of an endomorphism of
a $C^*$-algebra looks like is a very interesting one which, to our knowledge, is far from being settled in its full generality.
However, there are two classes of $C^*$-algebras for which the problem can be given
a complete answer: unital $*$-endomorphisms of abelian $C^*$-algebras, and not necessarily unital $*$-endomorphisms of finite dimensional
$C^*$-algebras.
We are going to tackle the latter, deferring the former case to the next section.  
\begin{prop}
\label{findimendo}
Let $\ga$ be a finite dimensional $C^*$-algebra, and $\F$ a, not necessarily unital, $*$-endomorphism. Then $\s(\F)\subset\{0\}\bigcup\bt$.
\end{prop}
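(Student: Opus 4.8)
The plan is to reduce to the automorphism case already settled in Corollary \ref{sime} by means of a Fitting-type decomposition of the finite-dimensional vector space $\ga$ adapted to the operator $\F\in\cb(\ga)$. Since $\ga$ is finite dimensional, $\s(\F)$ is a finite set of eigenvalues, and because every $*$-homomorphism between $C^*$-algebras is contractive we have $\|\F\|\le1$, whence ${\rm spr}(\F)\le1$ and $\s(\F)\subset\bd$. It therefore suffices to show that every nonzero eigenvalue lies on $\bt$.

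First I would exploit finite dimensionality to stabilise the iterated ranges and kernels: the decreasing chain ${\rm Ran}(\F)\supset{\rm Ran}(\F^2)\supset\cdots$ and the increasing chain $\ker(\F)\subset\ker(\F^2)\subset\cdots$ must both become stationary, so there is $N\in\bn$ with ${\rm Ran}(\F^N)={\rm Ran}(\F^{N+1})=:\gb$ and $\ker(\F^N)=\ker(\F^{N+1})$. By the Fitting lemma one then obtains the $\F$-invariant direct sum
$$
\ga=\ker(\F^N)\bigoplus\gb\,,
$$
on which $\F$ acts nilpotently on the first summand and invertibly on the second. Consequently $\s(\F)=\s\big(\F\lceil_{\ker(\F^N)}\big)\cup\s\big(\F\lceil_{\gb}\big)$, where the first set reduces to $\{0\}$ whenever $\ker(\F^N)\neq\{0\}$.

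The crucial point is that $\gb={\rm Ran}(\F^N)$ is the image of the $*$-endomorphism $\F^N$, hence a $*$-subalgebra of $\ga$, automatically norm closed by finite dimensionality, so it is a $C^*$-algebra in its own right. Moreover $\F(\gb)=\F^{N+1}(\ga)=\F^N(\ga)=\gb$, so $\F\lceil_\gb$ is a surjective $*$-endomorphism of the finite-dimensional $C^*$-algebra $\gb$; being a surjective linear endomorphism of a finite-dimensional space it is bijective, and thus a $*$-automorphism. Applying Corollary \ref{sime} (the automorphism alternative of Theorem \ref{isospectrum}) to $\F\lceil_\gb$ yields $\s\big(\F\lceil_\gb\big)\subset\bt$, and combining this with the spectral splitting above gives $\s(\F)\subset\{0\}\cup\bt$, as claimed.

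The step I expect to require the most care is the justification that the two Fitting components are genuinely $\F$-invariant and that the total spectrum is exactly the union of the spectra of the two restrictions, so that no nonzero eigenvalue escapes the analysis; this is standard linear algebra, but it must be stated cleanly, since one wants here the purely algebraic Fitting decomposition rather than a Riesz spectral-projection one. A minor auxiliary point is to record that the image of a $*$-homomorphism of $C^*$-algebras is a $*$-subalgebra, which is precisely what endows $\gb$ with the $C^*$-structure needed to invoke Corollary \ref{sime}.
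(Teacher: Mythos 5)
Your proof is correct, but it takes a genuinely different route from the paper's. The paper argues directly by contradiction: if $\F(b)=\l b$ with $b\neq0$ and $0<|\l|<1$, then multiplicativity gives $\F\big((b^*b)^n\big)=|\l|^{2n}(b^*b)^n$, and since $b^*b$ is a nonzero positive element its powers never vanish (as $\|(b^*b)^n\|=\|b^*b\|^n$), so the numbers $|\l|^{2n}$ yield infinitely many distinct eigenvalues, contradicting the finiteness of $\s(\F)$ in finite dimension. You instead use the Fitting decomposition $\ga=\ker(\F^N)\bigoplus\F^N(\ga)$ to split $\F$ into a nilpotent part and a surjective, hence bijective, $*$-endomorphism of the $C^*$-subalgebra $\gb:=\F^N(\ga)$, and then invoke Corollary \ref{sime} to get $\s(\F\lceil_\gb)\subset\bt$. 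Your route is structural: it exhibits the transient/persistent splitting explicitly (in the spirit of Example \ref{exop}, where $\gb=\bigcap_n\F^n(\ga)$, which in finite dimension equals ${\rm Ran}(\F^N)$), showing that $\F$ acts as an automorphism on the stable range --- strictly more information than the bare spectral inclusion. The paper's route buys brevity and self-containment: it needs only multiplicativity and the $C^*$-norm identity for positive elements, with no appeal to Theorem \ref{isospectrum} and no invariant-subspace bookkeeping. Two small points to tighten in your write-up: treat the degenerate cases $\gb=\{0\}$ (then $\F$ is nilpotent and $\s(\F)=\{0\}$) and $\ker(\F^N)=\{0\}$ (then $\F$ is already an automorphism), so that the union formula for the spectrum is interpreted correctly; and record explicitly that surjectivity of $\F\lceil_\gb$ plus finite dimensionality gives the injectivity required to apply Corollary \ref{sime}.
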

\begin{proof}
Suppose that $\l\in\s(\F)$ with $0<|\l|<1$, and thus $\F(b)=\l b$ for some $b\neq0$. With $\m=|\l|^2$, we have that $a_n:=(b^*b)^n$ is a non null eigenvector of $\F$ corresponding to the eigenvalue $\m^n$, $n=1,2\dots\,\,\,$. Since $|\s(\F)|<+\infty$, the set $\{\m^n\mid n=1,2\dots\}$ is finite, which is possible only if $|\l|\in\{0,1\}$.
\end{proof}

\section{Commutative case}
\label{commucase}

The present section is devoted to $*$-endomorphisms $\F:\ga\to\ga$ of abelian, not necessarily unital, $C^*$-algebras $\ga$. If $\F$ is not unital, some properties concerning $\s(\F)$ are covered by Remark \ref{rut}. Therefore, from now on we restrict the analysis to the unital case.

For the abelian $C^*$-algebra $\ga$, if $X:=\s(\ga)$, $X$ is a locally compact Hausdorff space and $\ga\sim C_o(X)$, with the convention that if $X$ is already compact, then $X_\infty=X\bigcup\emptyset$ and $C_o(X)=C(X)$.\footnote{ With $\sigma(\mathfrak{A})$ we denote the \emph{Gel'fand spectrum}
of the abelian $C^*$-algebra $\mathfrak{A}$, see {\it e.g.} \cite{T}, Section I.3.}
\begin{thm}
\label{kooo}
Let $\F$ be a unital $*$-endomorphism of the abelian $C^*$-algebra $\ga\sim C_o\big(\s(\ga)\big)$. Then there exists a uniquely determined proper map $\t:\s(\ga)\to\s(\ga)$ such that 
$\F$ is the Koopman operator of $\tau$:
$$
\F(f)=f\circ\t\,,\quad f\in C_o\big(\s(\ga)\big)\,.
$$
\end{thm}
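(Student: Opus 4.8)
The plan is to recover $\t$ by Gel'fand duality, realizing it as the transpose of $\F$ on the character space. Write $X:=\s(\ga)$ and identify $\ga$ with $C_o(X)$ via the Gel'fand transform $f\mapsto\hat f$, where $X$ is regarded as the space of nonzero characters (multiplicative linear functionals) of $\ga$ endowed with the weak-$*$ topology and $\hat f(\om)=\om(f)$. For $\om\in X$ I would simply set $\t(\om):=\om\circ\F$. Since $\F$ is a $*$-homomorphism and $\om$ is multiplicative, $\om\circ\F$ is again multiplicative, hence it is either a character or the zero functional; the whole point of the argument is to rule out the latter.

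The crucial step --- and the one where the unitality hypothesis of Definition \ref{apu} enters decisively --- is to show $\om\circ\F\neq0$, so that $\t$ genuinely lands in $X$ and does not escape to the point at infinity of the one-point compactification $X\cup\{\infty\}$. Here I would first record the elementary fact that, for any character $\om$ and any approximate unity $(v_\n)_{\n}$ of $\ga$, one has $\om(v_\n)\to1$: choosing $a$ with $\om(a)=1$, multiplicativity gives $\om(v_\n)=\om(v_\n)\om(a)=\om(v_\n a)\to\om(a)=1$, since $v_\n a\to a$ and $\om$ is continuous. Applying this to $\big(\F(u_\iota)\big)_{\iota}$, which is an approximate unity precisely because $\F$ is unital in the sense of Definition \ref{apu}, yields $(\om\circ\F)(u_\iota)=\om\big(\F(u_\iota)\big)\to1\neq0$. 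Thus $\om\circ\F$ is nonzero, and $\t(\om)\in X$ is well defined.

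Once $\t:X\to X$ is in place, the remaining properties are routine consequences of Gel'fand theory. Continuity of $\t$ in the weak-$*$ topology is immediate, because for each fixed $f\in C_o(X)$ the map $\om\mapsto\t(\om)(f)=\om\big(\F(f)\big)=\widehat{\F(f)}(\om)$ is the (continuous) Gel'fand transform of $\F(f)$. The Koopman intertwining relation is then just the definition unwound, $\widehat{\F(f)}(\om)=\om\big(\F(f)\big)=\t(\om)(f)=\hat f\big(\t(\om)\big)$, which upon identifying $f$ with $\hat f$ reads $\F(f)=f\circ\t$. Properness of $\t$ follows for free from the fact that $\F$ takes values in $C_o(X)$: for $f\in C_o(X)$ and $\eps>0$ one has $\{|f\circ\t|\geq\eps\}=\t^{-1}(\{|f|\geq\eps\})=\{|\F(f)|\geq\eps\}$, which is compact since $\F(f)\in C_o(X)$; given a compact $K\subset X$, choosing $f\in C_o(X)$ with $f\equiv1$ on $K$ then exhibits $\t^{-1}(K)$ as a closed subset of the compact set $\{|\F(f)|\geq 1/2\}$, hence compact. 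Uniqueness is equally clear: two such maps induce the same Koopman operator, so they agree against every $f\in C_o(X)$, and since $C_o(X)$ separates the points of the locally compact Hausdorff space $X$, they coincide.

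I expect the only genuine obstacle to be the well-definedness step, namely verifying that $\t$ never runs off to infinity; this is exactly the content that the approximate-unity formulation of unitality was tailored to supply, and it is what distinguishes the unital case treated here from the non-unital behaviour recorded in Remark \ref{rut}. By contrast, continuity, the intertwining identity, properness, and uniqueness are standard and should not present real difficulty.
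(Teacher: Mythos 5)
Your proposal is correct, and the step you rightly identify as the crux---showing $\om\circ\F\neq0$ via the approximate unity---is exactly where the paper, too, invokes Definition \ref{apu}; but you package the rest quite differently. The paper handles the non-compact case by unitizing: it extends $\F$ to $\F_\idd$ on $\ga_\idd\cong C(X_\infty)$, takes the transpose to obtain a continuous self-map $\t_\idd$ of the one-point compactification, proves $\t_\idd(X)\subset X$ (by the same approximate-unit argument, run as a contradiction: if $\f\circ\F=0$ then $\f(f)=\f(f)\lim_\iota\f(\F(u_\iota))=0$ for all $f$) together with $\t_\idd(\f_\infty)=\f_\infty$, and then gets properness of $\t:=\t_\idd\lceil_X$ for free from this picture, since a continuous self-map of $X_\infty$ fixing $\infty$ and preserving $X$ pulls compact subsets of $X$ back to compact subsets of $X$. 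You avoid the unitization altogether: you define $\t(\om):=\om\circ\F$ directly on $X$, prove the cleaner positive lemma that $\om(v_\n)\to1$ for every character and every approximate unity (rather than the paper's contradiction), read off continuity from the weak-$*$ description, and verify properness by hand via the identity $\t^{-1}(\{|f|\geq\eps\})=\{|\F(f)|\geq\eps\}$, compact because $\F(f)\in C_o(X)$, together with an Urysohn function equal to $1$ on a given compact $K$. Your route is more elementary and self-contained, at the cost of an explicit properness argument; the paper's route buys properness automatically and treats the compact and non-compact cases in one uniform Gel'fand-duality frame. The essential mathematical content---unitality in the sense of approximate units is what prevents characters from escaping to infinity---is identical in both.
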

\begin{proof}
We start by noticing that the proper map defining $\F$ is uniquely determined because $C_o(X)$ separates the points of $X=\sigma(\mathfrak{A})$. We also note that, if $X$ is already compact, then
$\t:=\F^{\rm t}\lceil_X$, the restriction of the transpose of $\F$ to the set of characters, does the job.

Suppose that $X$ is not compact, and consider the unital $C^*$-algebra $\ga_{\idd}:=\ga+\l\idd$ obtained from $\ga$ after adjoining a unity. Obviously, $\ga_{\idd}=C(X_\infty)$. Let 
$\F_{\idd}$ the natural extension of $\F$ to $\ga_{\idd}$ given by $\F_{\idd}(f+\l\idd):=\F(f)+\l$, which is a unital $*$-endomorphism by construction. As before, we put 
$\t_{\idd}:=(\F_{\idd})^{\rm t}\lceil_{X_\infty}$, which is a continuous map of $X_\infty$ realising $\F_{\idd}$: $\F_{\idd}(f)=f\circ\t_{\idd}$, for every $f\in C(X_\infty)$.

If $\f\in X$ is a character, the corresponding character $\f_{\idd}\in X_\infty$ is defined as before by $\f_{\idd}(f+\l\idd):=\f(f)+\l$, and the remaining character $\f_\infty$ in $X_\infty$ corresponding to the point at infinity is defined  by $\f_{\infty}(f+\l\idd):=\l$. 

To end the proof, we have to show that $\t_{\idd}(X)\subset X$ and $\t_{\idd}(\f_\infty)=\f_\infty$, and therefore this imply that $\t:=\t_{\idd}\lceil_{X}$ will be the proper self-map of $X$ we are searching for. 
To this aim, we consider $\om=\f_{\idd}$ for some $\f\in X$. Then
$$
\t_{\idd}(\om)(f+\l\idd)=\f(\F(f))+\l\,,\quad f\in C_o(X)\,,\,\,\l\in\bc\,.
$$

Consider any approximate unity $(u_\iota)_{\iota\in I}$ of $\ga$, and suppose that $\f\circ\F=0$. Then $\f(\F(u_\iota))=0$ for each $\iota\in I$.
Since $\F$ is unital, $\big(\F(u_\iota)\big)_{\iota\in I}$ is also an approximate unity of $\ga$, and therefore
$$
\f(f)=\lim_\iota\f(f\F(u_\iota))=\lim_\iota\f(f)\f(\F(u_\iota))=\f(f)\lim_\iota\f(\F(u_\iota))=0\,,
$$
which leads to a contradiction.
The proof now follows because,  $\t_{\idd}(\f_\infty)=\f_\infty$ by straightforward computation.
\end{proof}
In the above theorem, the hypothesis of unitality cannot be dropped. Indeed, the following simple example explains what can happen in non unital cases.
\begin{examp}
Consider the real line $\br$, together with the function $\tau(x):=\ln|x|$, for $x\in\br\setminus\{0\}$. It does not extend to a continuous 
function on the whole real line since $\lim_{x\rightarrow 0}\tau(x)=-\infty$. However, it extends to a continuous function $\tilde\tau$ from the one-point compactification $\br_\infty$ 
of $\br$ to itself, by defining $\tilde\tau(0):=\infty$ and $\tilde\tau(\infty):=\infty$.

Notice that, if $f\in C_o(\br)$, then $f\circ\tilde\tau$ is still continuous on $\br$ and vanishes at infinity.
Indeed, if we denote by $\tilde f$ the continuous extension of $f$ to $\br_\infty$, that is $\tilde f(\infty)=0$,  we have $\tilde f\circ\tilde\tau(0)=0$ and 
$\lim_{x\rightarrow\infty}f(\tau(x))=\lim_{x\rightarrow\infty} f(\ln|x|)=0$.
In other words, the map $C_o(\br)\ni f \rightarrow f\circ\tilde\tau\in C_o(\br)$ defines a $*$-endomorphism
of $C_o(\br)$ that does not come from any map from $\br$ to itself.

This is not in contradiction with Theorem \ref{kooo} because this endomorphism fails to be unital. In fact, since $u_\iota(0)=0$ for any $\iota\in I$, for the approximate unit 
$(u_\iota)_{\iota\in I}$ of $C_o(\br)$, no approximate unit
can be sent to an approximate unit.
\end{examp}

The following result will come in useful in the sequel.\footnote{It is expected that this result could be true for unital $*$-endomorphisms of not necessarily abelian $C^*$-algebras.}
\begin{prop}
\label{issk}
Let $\F:\ga\to\ga$ be a unital $*$-endomorphism of an abelian $C^*$-algebra $\mathfrak{A}$, together with its natural extension $\F_{\idd}:\ga_{\idd}\to\ga_{\idd}$. Then $\s(\F)=\s(\F_{\idd})$.
\end{prop}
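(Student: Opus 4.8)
The plan is to relate the two spectra through the topological direct sum decomposition furnished by the unitisation, and then to pin down the single eigenvalue that adjoining a unit produces. If $X:=\s(\ga)$ is already compact then $\ga$ is unital, $\idd\in\ga$, so $\ga_{\idd}=\ga$ and $\F_{\idd}=\F$, and there is nothing to prove; hence I assume $X$ non-compact, so that $\ga_{\idd}=C(X_\infty)$ with $X_\infty$ the one-point compactification. Writing $\idd$ for the constant function $1$ on $X_\infty$, one has the topological direct sum $\ga_{\idd}=\ga\oplus\bc\,\idd$, the bounded projection onto $\bc\,\idd$ being $g\mapsto g(\infty)\idd$. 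Both summands are invariant under $\F_{\idd}$: indeed $\F_{\idd}(f+\l\idd)=\F(f)+\l\idd$ with $\F(f)\in\ga$, so $\F_{\idd}$ is block diagonal, equal to $\F$ on $\ga$ and to the identity on the one-dimensional space $\bc\,\idd$. Since the spectrum of a block-diagonal operator on a topological direct sum is the union of the spectra of its blocks, this yields $\s(\F_{\idd})=\s(\F)\cup\{1\}$.

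It therefore remains to prove that $1\in\s(\F)$, after which $\s(\F)=\s(\F_{\idd})$ follows at once. Here I would invoke Theorem \ref{kooo}: as $\F$ is a unital $*$-endomorphism of the abelian algebra $\ga$, it is the Koopman operator $\F(f)=f\circ\t$ of a proper self-map $\t$ of $X$, whence $\F^k(f)=f\circ\t^k$ and $\|\F\|\le 1$, so that ${\rm spr}(\F)\le 1$. I argue by contradiction, assuming $1\notin\s(\F)$, that is, that $I-\F$ is invertible with bounded inverse. The telescoping identity $(I-\F)\sum_{k=0}^{n-1}\F^k=I-\F^n$ gives, for the Ces\`aro averages $A_n:=\frac1n\sum_{k=0}^{n-1}\F^k$, the formula $A_n=\frac1n(I-\F)^{-1}(I-\F^n)$; since $\|\F^n\|\le 1$ we obtain $\|A_n\|\le \frac{2}{n}\,\|(I-\F)^{-1}\|\xrightarrow[n\to\infty]{}0$.

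The contradiction comes from a matching lower bound $\|A_n\|\ge 1$ supplied by unitality. Fix an approximate unit $(u_\iota)_{\iota\in I}$ of $\ga$ with $0\le u_\iota\le 1$. By Definition \ref{apu} together with the Proposition preceding it, unitality is stable under iteration, so each power $\F^k$ is again unital and $\big(\F^k(u_\iota)\big)_{\iota\in I}$ is an approximate unit of $\ga$; in particular $0\le \F^k(u_\iota)\le 1$ pointwise and $\F^k(u_\iota)(x)\to 1$ as $\iota$ increases, for every fixed $x\in X$ and every $k$. Consequently, for each fixed $n$ and each $x$, $A_n(u_\iota)(x)=\frac1n\sum_{k=0}^{n-1}\F^k(u_\iota)(x)\to 1$, so $\sup_\iota\|A_n(u_\iota)\|_\infty\ge 1$; as $\|u_\iota\|\le 1$ this forces $\|A_n\|\ge 1$ for every $n$, contradicting the decay above. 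Hence $1\in\s(\F)$. The routine part is the block-diagonal spectral computation; the one genuine point — and the step I would expect to demand the most care — is the ergodic lower bound $\|A_n\|\ge 1$, where the Koopman description and the stability of unitality under iteration are exactly what make the pointwise convergence to $1$ available.
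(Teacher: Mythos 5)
Your proof is correct, and while the first half runs parallel to the paper, the decisive step is argued by a genuinely different method. The reduction is common to both: you package it as the block-diagonal decomposition $\ga_{\idd}=\ga\oplus\bc\idd$, invariant under $\F_{\idd}$, giving $\s(\F_{\idd})=\s(\F)\cup\{1\}$, whereas the paper checks the two inclusions by hand (evaluation at the point at infinity for one direction, and for the other the explicit formula $R_{\F_{\idd}}(\l)(f+\m\idd)=R_\F(\l)f+\tfrac{\m}{\l-1}\idd$, which is precisely the inverse your block argument produces). Where you truly diverge is the proof that $1\in\s(\F)$. The paper distinguishes whether $1$ is a cluster point or an isolated point of $\s(\F_{\idd})$; in the isolated case it uses the Riesz projection onto the spectral subspace for $\{1\}$ and a Gelfand-theoretic argument (an isolated point at infinity would force $\ga$ to be unital) to manufacture a non-constant fixed point of $\F_{\idd}$, hence a nontrivial fixed point of $\F$. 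You instead run a mean-ergodic argument: if $I-\F$ were invertible, the Ces\`aro averages $A_n=\tfrac1n\sum_{k=0}^{n-1}\F^k$ would obey $\|A_n\|\le\tfrac2n\|(I-\F)^{-1}\|\to0$, while unitality --- which, as you correctly note, passes to every power $\F^k$ by the Proposition preceding Definition \ref{apu} --- combined with the pointwise convergence to $1$ of approximate units of $C_o(X)$ forces $\|A_n\|\ge1$; both steps of this squeeze are sound. Your route is uniform (no case split) and more elementary, avoiding Riesz projections and the structure of the Gel'fand spectrum; the paper's route yields finer information in the isolated case, namely an actual eigenvector of $\F$ at $1$, whereas yours establishes only non-invertibility of $I-\F$. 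A further payoff of your argument is worth recording: replacing point evaluations by states (every state $\f$ satisfies $\f(v_\iota)\to1$ along any approximate unit $(v_\iota)$), it carries over essentially verbatim to unital $*$-endomorphisms of arbitrary, not necessarily abelian, $C^*$-algebras --- exactly the generalisation anticipated in the paper's footnote to this Proposition.
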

\begin{proof}
We restrict the situation to the nontrivial situation of non-unital algebras. We also note that $1\notin{\rm P}(\F_{\idd})$ because $\F_{\idd}$ is unital by construction.

Suppose first that $\l\in{\rm P}(\F_{\idd})$, and therefore $\l\neq1$. Take $f\in C_o(X)$, which corresponds a unique $g+\m\idd$, $g\in C_o(X)$, {\it i.e.} $\lambda(g+\mu\idd)-\Phi_{\idd}(g+\mu\idd)=f$, from which we arrive at the equality
 $f=\l g+\l\m-\F(g)-\m$. By evaluating it at $\infty$ we then get $0=(\l-1)\m$. Therefore, $\m=0$ and so $\l\in{\rm P}(\F)$.

Suppose now that $1\neq\l\in{\rm P}(\F)$. It is easy to show that
$$
R_{\F_{\idd}}(\l)(f+\m):=R_{\F}(\l)(f)+\frac{\m}{\l-1}
$$
is well defined and provides the resolvent $R_{\F_{\idd}}(\l)$.

It remains to check what happens for $\l=1$. If $1$ is a cluster point of $\s(\F_{\idd})$, thanks to the above part it is also a cluster point of $\s(\F)$ and thus $1\in\s(\F)$. Suppose that
$1$ is instead an isolated point of $\s(\F_{\idd})$.
Therefore, there exist nontrivial projections $P,Q\in\cb(\ga_{\idd})$ given by
$$
Q:=\frac1{2\pi\imath}\rcircleleftint_{\g}R_{\F_{\idd}}(\l)\di\l\,,\quad P:=I-Q\,,
$$
where $\g$ is a sufficiently small circle surronding $1$. 

We then deduce that $\ga_{\idd}$ is the topological direct sum $P\ga_{\idd}\bigoplus Q\ga_{\idd}$, and the summand $Q\ga_{\idd}$ contains $\idd$. The case $Q\ga_{\idd}=\bc\idd$ cannot occur because, otherwise, the point at infinity $\infty$ would be an isolated point of $\s(\ga_{\idd})$, contradicting that $\ga$ were not unital. Therefore, $Q\ga_{\idd}$ must contain a fixed-point $f\in\ga_{\idd}$ for $\F$ which is not a  constant function. But then $f-f(\infty)\idd\in\ga$ is a non-trivial fixed point
of $\F$, and thus $1\in\s(\F)$ as well.
\end{proof}
Notice that 1 is always in the approximate point spectrum of both spectra. In one case it is in the peripheral spectrum, whereas in the other case it may be or not an eigenvalue.

\medskip

We are now in a position to prove the main result of the present section concerning the spectrum of unital $*$-endomorphisms of, possibly non unital, abelian $C^*$-algebras. In fact, our result is an extension to
non-unital $C^*$-algebras of a consequence of a theorem by E. Scheffold  (same statement) 
concerning the spectrum of a positive linear operator acting  on a Banach lattice, see \cite{S, W, Gluck}.

\begin{thm}
Let $\F:\ga\to\ga$ be a unital $*$-endomorphism of the abelian $C^*$-algebra $\ga$, and $\t$ the 
corresponding Koopman operator. Then, either $\s(\F)=\bd$ or
$\stackrel{o}{\bd}\smallsetminus\{0\}\subset{\rm P}(\F)$. In particular, the following hold true:
\begin{itemize}
\item[(i)] if $\t(\s(\ga))\neq\s(\ga)$, then $0\in\s(\F)$;
\item[(i)] if for each $n\in\bn$, $\t^{n+1}(\s(\ga))\neq\t^n(\s(\ga))$, then $\s(\F)=\bd$;
\item[(iii)] if instead there exists $n_o\in\bn$ such that $\t^{n_o+1}(\s(\ga))=\t^{n_o}(\s(\ga))$, either $\s(\F)=\bd$ or $\s(\F)\subset\bt\bigcup\{0\}$.
\end{itemize}
\end{thm}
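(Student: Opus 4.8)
The plan is to reduce the whole statement to the compact (unital) setting, quote Scheffold's theorem for the dichotomy, and then read off the three refinements from the dynamics of the Koopman map $\t$ on $X:=\s(\ga)$. First I would pass to the unital extension: by Proposition \ref{issk} one has $\s(\F)=\s(\F_{\idd})$, where $\F_{\idd}=\F_{\idd}$ is the Koopman operator of the continuous self-map $\t_{\idd}$ of the compact space $X_\infty$ furnished by Theorem \ref{kooo}, with $\t_{\idd}\lceil_X=\t$ and $\t_{\idd}(\infty)=\infty$. Since $\F_{\idd}$ is a unital $*$-endomorphism of $C(X_\infty)$, it is in particular a positive operator (indeed a lattice homomorphism) on the Banach lattice $C(X_\infty)$, of spectral radius $1$ because $\F_{\idd}\idd=\idd$ gives $1\in\s(\F_{\idd})$. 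The main dichotomy, namely $\s(\F)=\bd$ or $\stackrel{o}{\bd}\smallsetminus\{0\}\subset{\rm P}(\F)$, is then precisely Scheffold's theorem \cite{S, W, Gluck} applied to $\F_{\idd}$ and transported back to $\F$ via the equality of spectra. Observe that, since $\s(\F)\subset\bd$ is closed, this dichotomy is equivalent to saying that $\s(\F)\cap(\stackrel{o}{\bd}\smallsetminus\{0\})$ is either empty or all of $\stackrel{o}{\bd}\smallsetminus\{0\}$. In particular the third assertion is nothing but this dichotomy restated, as $\s(\F)\subset\bt\cup\{0\}$ means exactly $\s(\F)\cap(\stackrel{o}{\bd}\smallsetminus\{0\})=\emptyset$, so it requires no separate argument once the dichotomy is in hand.

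For the assertion $0\in\s(\F)$ under $\t(X)\neq X$, I would argue that $\t_{\idd}$ fails to be onto $X_\infty$: any $x_0\in X\smallsetminus\t(X)$ satisfies $x_0\neq\infty$ and $x_0\notin\t(X)$, whence $x_0\notin\t_{\idd}(X_\infty)=\t(X)\cup\{\infty\}$. The image $\t_{\idd}(X_\infty)$ is compact, hence a proper closed subset of $X_\infty$, so by Urysohn's lemma there is a nonzero $f\in C(X_\infty)$ vanishing on it; then $\F_{\idd}f=f\circ\t_{\idd}=0$, so $0$ is an eigenvalue and $0\in\s(\F_{\idd})=\s(\F)$.

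For the claim $\s(\F)=\bd$ under the strict-decrease hypothesis, the heart of the matter, it suffices by the dichotomy to exhibit \emph{one} spectral value $\l_0$ with $0<|\l_0|<1$; the all-or-nothing alternative then upgrades this to $\s(\F)=\bd$. The mechanism I would use is the geometric decay of an eigenfunction along an infinite orbit, the prototype being the forward shift on a convergent sequence $\{1/n\}\cup\{0\}$, where every $\l_0$ with $0<|\l_0|<1$ is an eigenvalue via $f(1/n)=\l_0^{\,n-1}$, $f(0)=0$, the orbit accumulating at the fixed point where $f$ vanishes continuously. Concretely, fixing $\l_0$, the strict decrease $\t_{\idd}^{\,n+1}(X_\infty)\subsetneq\t_{\idd}^{\,n}(X_\infty)$ yields for every $N$ a point $p_N\in\t_{\idd}^{\,N}(X_\infty)\smallsetminus\t_{\idd}^{\,N+1}(X_\infty)$ and hence an orbit $x_0\to\cdots\to x_N=p_N$; these points are necessarily distinct, since a repetition would make the orbit eventually periodic and thus place $p_N$ on a cycle, forcing $p_N\in\t_{\idd}^{\,N+1}(X_\infty)$. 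Along such an injective segment one prescribes the decaying values $f_N(x_i)=\l_0^{\,i}$, so that $\F_{\idd}f_N-\l_0 f_N$ vanishes in the interior of the segment by construction, and one seeks to extend $f_N$ to a norm-one function with $\|\F_{\idd}f_N-\l_0 f_N\|\to0$, thereby realising $\l_0\in\s_{\rm ap}(\F_{\idd})\subset\s(\F)$ (cf.\ Proposition \ref{approximatepoint}). The structural reason to expect the disk is that $\F_{\idd}$ carries the increasing ideals $J_n:=\{f\in C(X_\infty)\mid f\lceil_{\t_{\idd}^{\,n}(X_\infty)}=0\}$ into $J_{n+1}$, a genuine shift-type behaviour when the $J_n$ strictly increase.

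The step I expect to be the main obstacle is precisely the rigorous control of the residual for this construction in full generality. The naive pointwise extension is defeated by \emph{branching}: an off-segment $\t_{\idd}$-preimage $w$ of an early segment point $x_i$ produces a residual of size $|\l_0|^{\,i}$, which is close to $1$ for small $i$ and does not tend to $0$. Resolving this forces one to make the assignment $f\approx\l_0^{\,\mathrm{height}}$ coherent across all preimages simultaneously, i.e.\ essentially to produce a genuine continuous eigenfunction exploiting the accumulation of the orbit inside the residual set $\bigcap_n\t_{\idd}^{\,n}(X_\infty)$ (where the limiting value $0$ is consistent with continuity), and to do so with nets rather than sequences when $X_\infty$ is not metrizable. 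One must also rule out the degenerate possibility that the shift on the graded pieces $J_{n+1}/J_n$ is quasinilpotent, which cannot happen here because $\mathrm{spr}(\F_{\idd})=1$; the clean way to package all of this is to extract the disk directly from Scheffold's finer structural description rather than by bare hand. Granting this one spectral value, the dichotomy closes the argument, and the last assertion, as noted, is merely the dichotomy restated.
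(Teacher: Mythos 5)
Your proposal follows essentially the same route as the paper: pass to the unital extension $\F_{\idd}$ acting on $C(X_\infty)$ via Proposition \ref{issk} and Theorem \ref{kooo}, invoke Scheffold's theorem for the spectral dichotomy, and transport the dynamical hypotheses between $\t$ and $\t_{\idd}$ using that $\t_{\idd}$ fixes the point at infinity, so that $\t_{\idd}^n(X_\infty)=\t^n\big(\s(\ga)\big)\cup\{\infty\}$ for every $n$. The paper's proof is exactly this, except that it credits the \emph{entire} statement --- the dichotomy together with items (i)--(iii) --- to Theorem 2.7 in \cite{S}, whose unital-case statement is the same.

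The differences lie in what you attempt by hand. Your Urysohn argument for (i) is correct and self-contained: $\t_{\idd}(X_\infty)$ is compact, hence closed, and proper in $X_\infty$ when $\t\big(\s(\ga)\big)\neq\s(\ga)$, so any nonzero $f\in C(X_\infty)$ vanishing on it satisfies $\F_{\idd}f=0$, giving $0\in\s(\F_{\idd})=\s(\F)$. Your remark that (iii) is merely the dichotomy restated is likewise correct, and costs nothing. For (ii), however, your approximate-eigenfunction construction has a genuine gap: the branching obstruction you yourself describe (off-orbit preimages of early orbit points producing residuals of size close to $1$) is real, and nothing in your sketch overcomes it, so this item cannot rest on that construction. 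It must be taken, as you ultimately concede and as the paper does, from the cited results of Scheffold and Wolff \cite{S, W}, where the equality $\s(\F_{\idd})=\bd$ under the strict-decrease hypothesis on the images is part of the quoted theorem. With (ii) so credited, your proof is complete and coincides with the paper's; the self-contained treatments of (i) and (iii) are a modest bonus the paper does not spell out.
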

\begin{proof}
The proof follows by collecting Proposition \ref{issk}, Theorem 2.7 in \cite{S}, and Theorem \ref{kooo}. Indeed, first one notes that $\s(\F)$ coincides with that $\s(\F_{\idd})$, where $\F_{\idd}$ is the unital extension of $\F$ to $\ga_{\idd}$. Then one applies Sheffold's theorem to recover the structure of $\s(\F)$, and the connections between the properties of $\s(\F)=\s(\F_{\idd})$ and those of the Koopman map of $\F_{\idd}$. Finally, one notes that the properties of  $\tau$ and $\tau_{\idd}$ characterising the spectral properties are the same because $\t_{\idd}(\s(\ga))\subset\s(\ga)$, and $\t_{\idd}(\infty)=\infty$ where $\infty\equiv\f_\infty$ is the character at infinity.
\end{proof}

\section{Gapped completely positive maps and the decoherence}\label{deco}

The aim of the present section is to manage the problem of the decoherence by investigating in detail the spectral, and therefore the ergodic properties, of the completely positive map under consideration.  

More precisely, we start with a discrete $C^*$-dynamical system $(\ga,\F)$, where $\ga$ is a unital $C^*$-algebra and $\F$ a unital completely positive map $\F$. 
Therefore, from now on we tacitly assume the unitality of the involved algebras and maps.
In such a situation, $\s(\F)\subset\bd$ and ${\rm spr}(\F)=\|\F\|$.

The idea will be that of separating the "observables" associated to the peripheral part of the spectrum which are wandering and therefore constitute the part of the $C^*$-algebra which is persistent relative to the dynamics generated by $\F$. The remaining part is associated to the portion of the spectrum inside the unit disk which, hopefully, can represent the transient part. By Proposition \ref{ergorisultato}, the last property is certainly true whenever $\F$ is gapped. 

The first step is to investigate the structure of the elements associated to the peripheral part of the spectrum. For such a purpose, we start with the following
\begin{prop}
	\label{PQ}
For the $C^*$-dynamical system $(\ga,\F)$, with $\F$ gapped and $P$ given in \eqref{gaap}, $P\ga$ is a norm closed operator system.
\end{prop}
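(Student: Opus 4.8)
The plan is to verify the three defining properties of a norm-closed operator system for $P\ga$: norm closedness, stability under the adjoint, and containment of the identity $\idd$. Closedness is immediate, since $P\in\cb(\ga)$ is a bounded idempotent (the Riesz projection complementary to $Q$ in \eqref{gaap}), so ${\rm Ran}(P)=\ker(Q)$ is norm closed. For the identity, I would exploit the unitality $\F(\idd)=\idd$, which makes $\idd$ an eigenvector for the peripheral eigenvalue $1\in\bt$. Choosing $\g$ as in the proof of Proposition \ref{ergorisultato}, namely the circle $\{|\l|=r\}$ with $r<1$, the point $1$ lies outside $\g$ while $R_\F(\l)\idd=(\l-1)^{-1}\idd$, so Cauchy's theorem yields $Q\idd=\frac1{2\pi\imath}\oint_\g(\l-1)^{-1}\idd\,\di\l=0$; hence $P\idd=\idd\in P\ga$.

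The core of the argument is self-adjointness, for which it suffices to prove that $P$ is $*$-preserving, i.e.\ that $P$ commutes with the isometric conjugate-linear involution $J\colon a\mapsto a^*$. Since $\F$ is completely positive, it is positive, hence $*$-preserving, so $J\F J=\F$ and consequently $\F J=J\F$. Because $J$ is conjugate-linear one computes $J(\l I-\F)J=\bar\l I-\F$, whence $JR_\F(\l)J=R_\F(\bar\l)$. Feeding this into the contour integral defining $Q$ and changing variables $\l\mapsto\bar\l$ — here using that the circle $\g=\{|\l|=r\}$ is invariant under complex conjugation — I would obtain $JQJ=Q$, and therefore $JPJ=P$. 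Then, for every $a\in\ga$, $(Pa)^*=J(Pa)=P(Ja)=P(a^*)\in P\ga$, so $(P\ga)^*\subseteq P\ga$; replacing $a$ by $a^*$ gives the reverse inclusion and thus $(P\ga)^*=P\ga$.

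The main obstacle is the bookkeeping in this last step: the conjugate-linearity of $J$ turns the constant $1/(2\pi\imath)$ into its conjugate and simultaneously reverses both the integration variable (to $\bar\l$) and the orientation of $\g$, and the two effects must cancel — precisely what conjugation-invariance of the circular contour guarantees. A cleaner alternative to $JPJ=P$ is to invoke uniqueness of the Riesz projection: $JPJ$ is again an idempotent commuting with $\F$, and since $\l\in\s(\F)\iff\bar\l\in\s(\F)$ (as $J(\l I-\F)J=\bar\l I-\F$ with $J$ an isometric bijection), its range carries the conjugation-symmetric spectral set $\overline{\sigma_{\rm ph}(\F)}=\sigma_{\rm ph}(\F)$, forcing $JPJ=P$.
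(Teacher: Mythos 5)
Your proposal is correct and follows essentially the same route as the paper's proof: norm closedness from $P$ being a bounded projection, $P\idd=\idd$ via the eigenvalue $1$ lying outside the contour, and selfadjointness by showing $Q$ is $*$-preserving through the identity $\big(R_\F(\l)x\big)^*=R_\F(\bar\l)x^*$ (realness of $\F$) combined with conjugation-invariance of the circular contour --- the paper carries out this last cancellation explicitly by parametrising $\g$ as $re^{\imath\theta}$ and substituting $\theta\mapsto-\theta$, which is exactly the bookkeeping you describe abstractly via $JQJ=Q$. Your closing alternative through uniqueness of the Riesz projection is a fine additional observation, but the core argument is the same.
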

\begin{proof}
We first note that $P\mathfrak{A}\subset\ga$ is closed in norm since by definition it is the range of $P$, which is a continuous
projection. In addition, $P\ga$ contains the identity $\idd:=\idd_\ga$ of $\ga$ because $P\idd=\idd$.

All is left to do is verify that $P\ga$ is selfadjoint, that is $P\ga=(P\ga)^*$.
This is a straightforward consequence of the fact that both $Q$ and $P$ given in  \eqref{gaap}, are $*$-preserving maps.

Since $Q+P=I$, where $I\equiv \idd_{\mathcal{B}(\ga)}$, it is enough to verify that $Q$ preserves the involution of $\ga$, which is seen as follows.
If $x\in\ga$ and $\g_r$ is the circle of radius $0<r<1$ leaving outside only $\s_{\rm ph}(\F)$, then
\begin{align*}
(Qx)^*=&\bigg(\frac1{2\pi\imath}\rcircleleftint_\g R_\F(\l)x\di\l\bigg)^*
=\bigg(\frac{r}{2\pi}
  \int_{0}^{2\pi} e^{\imath\theta} R_\F(r e^{\imath\theta})x{\rm d}\theta\bigg)^*\\
=&\frac{r}{2\pi}
  \int_{0}^{2\pi}\big(e^{\imath\theta} R_\F(r e^{\imath\theta})x\big)^*{\rm d}\theta
 =\frac{r}{2\pi}
  \int_{0}^{2\pi} e^{-\imath\theta} R_\F(r e^{-\imath\theta})x^*{\rm d}\theta\,,
 \end{align*}
where we have used 
$\big(R_\F(\l)x\big)^*= R_\F(\bar{\l})x^*$, which easily follows from $\Phi$ being a real map.
Now,
\begin{align*}
(Qx)^*=&\frac{r}{2\pi}
  \int_{0}^{2\pi} e^{-\imath\theta} R_\F(r e^{-\imath\theta})x^*{\rm d}\theta
=-\frac{r}{2\pi}
  \int_{0}^{-2\pi} e^{\imath\theta} R_\F(r e^{\imath\theta})x^*{\rm d}\theta\\
=&\frac{r}{2\pi}
  \int_{-2\pi}^{0} e^{\imath\theta} R_\F(r e^{\imath\theta})x^*{\rm d}\theta  
= \frac{r}{2\pi}
  \int_{0}^{2\pi} e^{\imath\theta} R_\F(r e^{\imath\theta})x^*{\rm d}\theta\\
 =&\frac1{2\pi\imath}\rcircleleftint_\g R_\F(\l)x^*\di\l=Qx^*\,,
 \end{align*}
which ends the proof.
\end{proof}
\begin{rem}
Since $P$ commutes with $\Phi$, the operator system $\mathcal{S}:=P\ga$ is clearly invariant
under $\Phi$, that is $\Phi(\mathcal{S})\subset\mathcal{S}$ and thus $\Phi(\mathcal{S})=\mathcal{S}$ because $\Phi\lceil_\mathcal{S}$ is invertible.
\end{rem}

The problem we are going to discuss deals with the possibility to define a new product $\circ$ that
makes $\mathcal{S}$ into a $C^*$-algebra in such a way that the restriction of
$\Phi$ to $\mathcal{S}$ is a $*$-isomorphism with respect to the new product.
We start with an easy yet motivating example when this can be certainly done.

\begin{examp}
Let $\Phi$ be a, not necessarily unital,  $*$-endomorphism of the $C^*$-algebra $\ga$ such that $\s(\F)\subset\{0\}\bigcup\bt$.\footnote{By Proposition \ref{findimendo}, this is certainly true when $\ga$ is finite dimensional.} 
In this case, $\F$ cannot be injective, for otherwise it would be isometric and its spectrum should be the whole
disk. But then $Q$  projects onto $\ker(\F)$, which is a nontrivial two-sided ideal. Therefore, it follows that $P=I-Q$
projects onto ${\rm Ran}(\F)$ because,  for every $x\in\mathfrak{A}$ one has 
$$P(\F(x))=\F(x)-Q(\F(x))=\F(x)-\F(Qx)=\F(x).$$
 But then $P\ga={\rm Ran}(\F)\sim\ga/\ker(\F)$.

In other terms, $\mathcal{S}=P\ga$ is already a $C^*$-algebra (with  $\Phi(\idd_\ga)=\idd_{P\ga}$)
without the product
being changed. Moreover, since the restriction of $\Phi$ to
$\mathcal{S}$ is injective, $\Phi$ acts on $\mathcal{S}$ as a $*$-automorphism.
\end{examp}

\begin{examp}
\label{exop}
The extreme opposite is when the $*$-endomorphism $\F:\ga\to\ga$, supposed to be unital for simplicity, is injective but not surjective.\footnote{Such a situation can occur only if 
$\ga$ is infinite dimensional.} In such a situation, $\s(\F)=\bd$. However, putting $\gb:=\bigcap_{n\in\bn}\F^n(\ga)$, we easily recover that $\F\lceil_\gb$ is a $*$-automorphism possibly trivial in the case $\gb=\bc I$. Since
$\F$ is not gapped, it is expected that $\gb$ does not have a topological complement in $\ga$.
\end{examp}

The following result  ({\it cf.} \cite{K}, Theorem 2.1) deals with the general situation of a completely positive map on a finite dimensional 
$C^*$-algebra.
\begin{prop}
\label{newprod}
Let $\F:\ga\to\ga$ be a completely positive unital map of the finite dimensional $C^*$-algebra $\ga$. Then there exists a subsequence $(n_j)_j\subset\bn$ such that
$P=\lim_j\F^{n_j}$. 

Therefore $P$ is a completely positive projection, and thus $P\ga$ is a $C^*$-algebra when it is equipped with the product
\begin{equation}
\label{effp}
a\circ b:= P(ab)\,,\quad a,b\in P\ga\,.
\end{equation}
\end{prop}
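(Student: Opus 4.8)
The plan is to establish the statement in three stages: first produce the subsequence with $\F^{n_j}\to P$, then deduce that $P$ is a unital completely positive idempotent, and finally invoke the Choi--Effros theorem to equip $P\ga$ with a $C^*$-structure. The bulk of the genuine work lies in the first stage; the passage from ``$P$ is a completely positive projection'' to ``$(P\ga,\circ)$ is a $C^*$-algebra'' is a citation of \cite{CE}, while the complete positivity of $P$ is automatic once $P$ is exhibited as a norm-limit of the completely positive maps $\F^{n_j}$.

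First I would record the finite-dimensional data. Since $\F$ is unital and completely positive, $\|\F^n\|=\|\F^n(\idd)\|=1$ for every $n$, so the powers $\{\F^n\}_n$ form a uniformly bounded family; in particular $\F$ is gapped (as $\ga$ is finite dimensional) with ${\rm spr}(\F)=1$, and \eqref{gaap} yields the splitting $\ga=P\ga\oplus Q\ga$. By Proposition \ref{ergorisultato} one already has $\F^nQ\to0$ in norm, so the transient summand is harmless and it only remains to control the persistent summand. On it, $\F_P:=\F\lceil_{P\ga}$ is invertible with $\s(\F_P)=\s_{\rm ph}(\F)\subset\bt$, whence $|\det\F_P|=\prod_{\l\in\s(\F_P)}|\l|=1$.

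The heart of the argument --- and the step I expect to be the main obstacle --- is to find $n_j\to\infty$ with $\F_P^{n_j}\to I_{P\ga}$, that is, to make all the peripheral eigenvalues return simultaneously to $1$. I would argue by compactness and recurrence: the bounded family $\{\F_P^m\}_{m\geq0}$ is relatively compact in $\cb(P\ga)$ (finite dimension), so some subsequence $\F_P^{m_k}\to S$, which we may take with $m_{k+1}-m_k\to\infty$. The limit $S$ is invertible because $|\det S|=\lim_k|\det\F_P|^{m_k}=1$, and inversion is continuous on invertible operators, so $\F_P^{-m_j}\to S^{-1}$. Setting $n_j:=m_{j+1}-m_j\geq1$ (and passing to an increasing subsequence if necessary) gives $\F_P^{n_j}=\F_P^{m_{j+1}}(\F_P^{m_j})^{-1}\to SS^{-1}=I_{P\ga}$. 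Combining this with $\F^{n_j}Q\to0$ and the fact that $P$ and $Q$ commute with $\F$, one obtains $\F^{n_j}=\F^{n_j}P+\F^{n_j}Q\to P$ in norm, as claimed. The only subtlety to dispatch carefully is the simultaneous, rather than separate, recurrence of the eigenvalues, which is exactly what the group-theoretic identity $\F_P^{m_{j+1}-m_j}=\F_P^{m_{j+1}}(\F_P^{m_j})^{-1}$ encodes.

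Finally I would assemble the conclusion. Each $\F^{n_j}$ is completely positive, being a composition of completely positive maps, and the cone of completely positive maps on the finite-dimensional $\ga$ is closed; hence its norm-limit $P$ is completely positive. Moreover $P^2=P$ by \eqref{gaap}, and $P(\idd)=\lim_j\F^{n_j}(\idd)=\idd$, so $P$ is a unital completely positive projection onto the norm-closed operator system $P\ga$ (Proposition \ref{PQ}). The Choi--Effros theorem \cite{CE} then applies verbatim: $P\ga$ equipped with the product \eqref{effp}, together with the inherited involution and norm, is a $C^*$-algebra.
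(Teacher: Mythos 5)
Your proof is correct, and the way you produce the subsequence is genuinely different from the paper's. The paper argues arithmetically: since $\s_{\rm ph}(\F)$ is finite, it asserts that there is an integer $m\geq1$ with $\l^m=1$ for every $\l\in\s_{\rm ph}(\F)$, whence $\F^m\lceil_{P\ga}=P$ and $\F^{mj}\to P$ along the arithmetic progression $n_j=mj$; the transient part is handled by Proposition \ref{ergorisultato} exactly as you do, and the conclusion is the same appeal to Choi--Effros. You replace the arithmetic step by compactness and recurrence on the persistent block: extract a convergent subsequence $\F_P^{m_k}\to S$ from the power-bounded family, prove $S$ invertible via $|\det\F_P|=1$, and use the difference trick $\F_P^{m_{j+1}-m_j}=\F_P^{m_{j+1}}(\F_P^{m_j})^{-1}\to SS^{-1}=I_{P\ga}$. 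What each approach buys: the paper's route, when available, gives an explicit arithmetic subsequence in one line; yours is the more robust one, and it is in fact the argument that covers the general case. Finiteness of the peripheral spectrum does not by itself force the peripheral eigenvalues to have finite order: for the unital completely positive (indeed automorphic) map $\F(a)=u^*au$ on $\bm_2(\bc)$ with $u={\rm diag}(1,e^{\imath\a})$ and $\a/2\pi$ irrational, one has $\s(\F)=\s_{\rm ph}(\F)=\{1,e^{\imath\a},e^{-\imath\a}\}$, so no power of $\F$ equals $P=I$; yet your recurrence argument (which is essentially the mechanism behind Theorem 2.1 of \cite{K}, the result the paper cites) still produces $n_j$ with $\F^{n_j}\to I$. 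So your proof not only differs from the paper's but quietly repairs its one unjustified step. Two further micro-remarks in your favour: the determinant trick avoids any discussion of Jordan blocks (otherwise one would need semisimplicity of the peripheral eigenvalues, deduced from power-boundedness, even to make sense of the paper's claim $\F^m\lceil_{P\ga}=P$), and your care in arranging $m_{k+1}-m_k\to\infty$ is exactly what guarantees that the $n_j$ can be taken strictly increasing, as the statement requires.
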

\begin{proof}
Since $\s(\F)$, and therefore $\s_{\rm ph}(\F)$, is finite, there exists a nonzero natural number $m$ such that $\l^m=1$, $\l\in\s_{\rm ph}(\F)$. We then argue that, on the one hand 
$\F^m\lceil_{P\ga}=P$, and on the other hand ({\it cf.} Proposition \ref{ergorisultato}) $\lim_j\F^{mj}\lceil_{Q\ga}=0$.

Therefore, $P$ is a completely positive projection and \eqref{effp} does define a $C^*$-product by mimicking the proof of Theorem 3.1 in \cite{CE}.
\end{proof}
The above result suggests a new way to manage the question of the decoherence, including more general situations at least for gapped completely positive maps $\F$. The first step is indeed to define a new product that changes an operator system to a fully-fledged $C^*$-algebra, which  can always be done in the finite dimensional case, as we proved in Proposition \ref{newprod}. 

We summarise the situation in the following
\begin{rem}
\label{decch}
Let $(\ga,\F)$ be a $C^*$-dynamical system with $\F$ gapped. If
\begin{itemize}
\item[(i)] the projection $P$ given in \eqref{gaap} is completely positive, and thus $\cs_o:=P\ga$ equipped with the map \eqref{effp} which is indeed a $C^*$-product, is a unital $C^*$-algebra,
\item[(ii)] $\F\lceil_{\cs_o}$ is a $*$-automorphism w.r.t. the product \eqref{effp}, 
\end{itemize}
then $\big(\cs_o,\F\lceil_{\cs_o}\big)$ is a Hamiltonian $C^*$-dynamical system.
\end{rem}
At this stage, it is very natural to address the following question.
\begin{conj}
\label{cpconj}
Let $\F:\ga\to\ga$ be a unital completely positive map acting on the unital $C^*$-algebra $\ga$ such that $\s(\F)\subset\bt$. Then $\F$ is a $*$-automorphism.
\end{conj}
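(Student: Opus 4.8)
The plan is to reduce the statement to multiplicativity and then to exploit a symmetry between $\F$ and $\F^{-1}$. Since $\s(\F)\subset\bt$ we have $0\in{\rm P}(\F)$, so $\F$ is a linear bijection of $\ga$; consequently it suffices to prove that $\F$ is a $*$-homomorphism, as a bijective unital $*$-homomorphism is automatically a $*$-automorphism. Being unital and completely positive, $\F$ satisfies the Kadison--Schwarz inequality $\F(a^*a)\geq\F(a)^*\F(a)$ for every $a\in\ga$, and each power $\F^n$ is again unital completely positive, so that $\|\F^n\|=\|\F^n(\idd)\|=1$ for all $n\in\bn$; in particular $\F$ is power bounded and, since $\F(\idd)=\idd$, the value $1$ is always a peripheral eigenvalue.

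The heart of the argument is the following claim: \emph{$\F^{-1}$ is again completely positive}. Granting this, the conclusion follows by a double application of the Schwarz inequality. Indeed, writing the Schwarz inequality for $\F^{-1}$ at the point $b=\F(a)$ gives $\F^{-1}\big(\F(a)^*\F(a)\big)\geq a^*a$; applying the (order preserving) positive map $\F$ to both sides yields $\F(a)^*\F(a)\geq\F(a^*a)$, which together with the Schwarz inequality for $\F$ forces the equality $\F(a^*a)=\F(a)^*\F(a)$ for every $a\in\ga$. By the multiplicative domain characterisation (Choi), every $a\in\ga$ then lies in the multiplicative domain, so that the unital positive, hence $*$-preserving, map $\F$ is a $*$-homomorphism, and therefore, being bijective, a $*$-automorphism.

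To establish the claim, the idea I would pursue is that $\s(\F)\subset\bt$ should make the discrete semigroup $\{\F^n:n\geq0\}$ \emph{almost periodic}, that is relatively compact in the point-weak operator topology. Its closure $K$ would then be a compact abelian (semitopological) semigroup; the absence of spectrum inside $\stackrel{o}{\bd}$ (no dissipative directions) should guarantee that the minimal ideal of $K$ is a group containing $\idd$, so that one can extract a net with $\F^{n_\a}\to\idd$. Since the set of unital completely positive maps is closed in the point-weak topology and $\F^{n_\a-1}=\F^{n_\a}\F^{-1}$, one would obtain $\F^{-1}=\lim_\a\F^{n_\a-1}$ as a limit of unital completely positive maps, whence $\F^{-1}$ is itself unital completely positive. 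In the finite dimensional case this programme can be carried out unconditionally: power boundedness together with $\s(\F)\subset\bt$ forces $\F$ to be diagonalisable with unimodular eigenvalues, and Kronecker's simultaneous approximation theorem applied to the finitely many eigenvalues produces a sequence $(n_j)_j$ with $\lambda^{n_j}\to1$ for all $\lambda\in\s(\F)$, hence $\F^{n_j}\to\idd$ in norm.

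The main obstacle is precisely the relative compactness of $\{\F^n:n\geq0\}$ in the general, possibly infinite dimensional, setting: $\s(\F)\subset\bt$ together with power boundedness does not by itself guarantee almost periodicity of a Banach-space operator, and it is here that complete positivity and the $C^*$-structure would have to enter in an essential way (for instance by passing to a $\F$-invariant state produced by a Markov--Kakutani argument and to the associated GNS representation, at the cost of controlling its kernel). This is exactly why the statement is proposed as a conjecture rather than a theorem: the reduction to the positivity of $\F^{-1}$ is robust, but the compactness/almost-periodicity input is available to us only under additional hypotheses such as finite dimensionality, or more generally gappedness with a suitably discrete peripheral spectrum.
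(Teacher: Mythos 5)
First, a point of order: the statement you set out to prove is stated in the paper as Conjecture \ref{cpconj} and is left \emph{open} there; the paper offers no proof, only a subsequent Remark that draws conditional consequences for the finite dimensional persistent part $\cs_o$. So your proposal must be judged as an attack on an open problem, and a substantial part of it is correct and valuable. The reduction is sound: since $0\notin\s(\F)$, the map $\F$ is a linear bijection with $\F^{-1}(\idd)=\idd$, and if $\F^{-1}$ is completely (indeed merely $2$-) positive, then the Schwarz inequality for $\F^{-1}$ evaluated at $\F(a)$ gives $\F^{-1}\big(\F(a)^*\F(a)\big)\geq a^*a$; applying the positive map $\F$ yields $\F(a)^*\F(a)\geq\F(a^*a)$, which together with the Schwarz inequality for $\F$ forces equality, and Choi's multiplicative-domain theorem then makes $\F$ a $*$-homomorphism, hence a $*$-automorphism by bijectivity. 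This exhibits the conjecture as equivalent to the positivity of $\F^{-1}$, a clean reformulation. Your finite dimensional argument is also complete and correct: $\|\F^n\|=\|\F^n(\idd)\|=1$ gives power boundedness, which together with $\s(\F)\subset\bt$ excludes nontrivial Jordan blocks, and Kronecker's theorem produces $(n_j)_j$ with $\F^{n_j}\to\idd$ in norm, so that $\F^{-1}=\lim_j\F^{n_j-1}$ is a norm limit of unital completely positive maps. This is the same device the paper uses to obtain $P=\lim_j\F^{n_j}$ in Proposition \ref{newprod}, and it settles unconditionally, in finite dimensions, what the Remark following the conjecture derives only conditionally from it.

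The gap is the one you honestly flag, but it is worse than a missing compactness lemma: the intermediate statement your strategy needs --- a net with $\F^{n_\a}\to\idd$ in the point-weak topology --- is \emph{false} under the hypotheses of the conjecture, even in cases where its conclusion holds trivially. Take $\ga=C(X)$ with $X=[0,1]$ and $\F$ the Koopman operator of the homeomorphism $\t(x)=x^2$; then $\F$ is a unital $*$-automorphism, hence a surjective isometry, so $\s(\F)\subset\bt$ by Theorem \ref{isospectrum}. Testing point-weak convergence against the point evaluations $\delta_z$, a net $\F^{n_\a}\to\idd$ would force $f(\t^{n_\a}z)\to f(z)$ for every $f\in C([0,1])$ and every $z$; but for $0<z<1$ one has $\t^n z=z^{2^n}\to 0$, so $f(\t^n z)\to f(0)$ along the full sequence, and $\idd$ is not a cluster point of $\{\F^n:n\geq1\}$ in the point-weak topology (nor, a fortiori, in any finer one). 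The same example shows the other defect you anticipate: the orbit $\big(\F^n(f)\big)_n$ has no weak cluster point in $C([0,1])$ when $f(0)\neq f(1)$, so the relative weak compactness needed to start the Jacobs--de Leeuw--Glicksberg machinery is genuinely unavailable in non-reflexive $C^*$-algebras. Consequently, any proof of the conjecture must reach the positivity of $\F^{-1}$, or directly the multiplicativity of $\F$, by a route that does not pass through clustering of the powers at the identity. What survives of your proposal is the reduction, which is correct and worth keeping, together with a full proof in the finite dimensional case and, more generally, in the ``rigid'' case where $\idd$ does lie in the point-norm closure of $\{\F^n:n\geq1\}$.
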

\begin{rem}
An immediate consequence of the above conjecture would be the following: for the unital completely positive map $\F:\ga\to\ga$, with $\ga$ finite dimensional and $\cs_o$ equipped with the product \eqref{effp}, $\big(\cs_o,\F\lceil_{\cs_o}\big)$ provides a Hamiltonian $C^*$-dynamical system.
\end{rem}
\begin{proof}
It is enough to note that $\cs_o$ equipped with the product $\circ$ is a $C^*$-algebra and $\F\lceil_{\cs_o}$ is a unital completely positive map with 
$\s\big(\F\lceil_{\cs_o}\big)\subset\bt$.
\end{proof} 
For gapped unital completely positive maps $\F:\ga\to\ga$ for which \eqref{effp} defines a product on $\cs$, we denote by $\cs_o$ the $C^*$-algebra made of $\cs$ endowed with the product $\circ$ in \eqref{effp}.
\begin{rem}
\label{ism}
Let $(\ga,\F)$ be a $C^*$-dynamical system with $\ga$ finite dimensional. If the restriction $\F\lceil_{\cs_o}$
endowed with the $C^*$-product \eqref{effp} provides a
Hamiltonian $C^*$-dynamical system, then $\F$ is isometric on $\cs$ equipped with the original norm. 
\end{rem}
\begin{proof}
The assertion easily follows by the structure of $\cs_o$ given in \cite{CE}, Theorem 7.1. Indeed, by using the notations of such a paper, for the involved matrices we have
$$
X=\begin{pmatrix} 
	 A &0&0\\
	0&A&B\\
	0&C&D\\
     \end{pmatrix}\,,\quad
\F(X)=\begin{pmatrix} 
	 A_\F &0&0\\
	0&A_\F&B_\F\\
	0&C_\F&D_\F\\
     \end{pmatrix}\,.
$$
Taking into account that, for the generic element $X$ as above, the new norm is $\bigg\|\begin{pmatrix}
	A&B\\
	C&D\\
     \end{pmatrix}\bigg\|$,  $\|X\|=\bigg\|\begin{pmatrix}
	A&B\\
	C&D\\
     \end{pmatrix}\bigg\|$, and $\F$ is supposed to be isometric w.r.t. this new norm, we get
$$
\|X\|=\bigg\|\begin{pmatrix}
	A&B\\
	C&D\\
     \end{pmatrix}\bigg\|=\bigg\|\begin{pmatrix}
	A_\F&B_\F\\
	C_\F&D_\F\\
     \end{pmatrix}\bigg\|=\|\F(X)\|\,.
$$
\end{proof}
\begin{examp}
\label{papy}
We would like to revisit Example 2 in \cite{CSU}, where it is explained what can happen for the variables associated to the peripheral spectrum. 

For any fixed unital linearly independent vectors $u,v\in\bc^n$, a linear operator $\mathcal{L}$ is defined on $\bm_n(\bc)$ by
$$
\mathcal{L}(A):=\bigg[\langle Au,u\rangle\langle \,{\bf\cdot}\,,v\rangle-\frac12\big(\langle \,{\bf\cdot}\,,A^*v\rangle+\langle \,{\bf\cdot}\,,v\rangle A\big)\bigg]v\,,
$$
which is the generator of a Markov semigroup. 
It was proven that the peripheral spectrum of $\mathcal{L}$ ({\it i.e.} those eigenvectors with null real part) is made only by $0$, with eigenspace denoted by $V_o$.

The case $n=3$ provides 
an interesting example. Choosing $v:=e_1$ and $u:=e_2$, where $(e_1, e_2, e_3)$ is the canonical basis of
$\bc^3$, one finds that $V_o$ is the operator system made up of all matrices of the form
$$
x=\begin{pmatrix} 
	\om(A) &0\\
	0&A\\
     \end{pmatrix}
     =\begin{pmatrix} 
	 \a &0&0\\
	0&\a&\b\\
	0&\g&\d\\
     \end{pmatrix}\,,\quad\,\, \a,\b,\g,\d\in\bc\,,
$$
where $A\in\bm_2(\bc)$ and $\om(A):=A_{11}$.

However, although $V_o$ fails to be closed under the product, as explained in \cite{CE}, Section 7 or by direct inspection,
it can nonetheless be endowed
with the new product given for $x=\begin{pmatrix} 
	\om(A) &0\\
	0&A\\
     \end{pmatrix}$ and $y=\begin{pmatrix} 
	\om(B) &0\\
	0&B\\
     \end{pmatrix}$,
$$
x\circ y=\begin{pmatrix} 
	\om(AB) &0\\
	0&AB\\
     \end{pmatrix}\,,
$$
such that $(V_o, \circ)\sim\bm_2(\bc)$ as a $C^*$-algebra.

Summarising, in Remark \ref{decch},  (i) is satisfied. We also note that (ii) is trivially satisfied because $V_o={\rm Ker}\mathcal{L}$.
\end{examp}

\section{Examples}\label{examples}

We start by discussing a  simple yet nontrivial class of commutative finite dimensional
examples based on Markov chains, where all  the properties outlined in Remark \ref{decch} hold true.

\begin{examp}

\noindent On $\mathbb{C}^3$ thought of as a commutative $C^*$-algebra $\ga$, where the product is performed component-wise and 
with unity $\idd:=(1, 1, 1)$, we consider the class of positive maps
associated with the stochastic $3\times 3$ matrix $T$ given by
		\begin{equation*}
		T=
		\begin{bmatrix}
		a & b & 1-(a+b)\\
		0 & 0 & 1\\
		0 & 1 & 0
		\end{bmatrix},
		\end{equation*}
where $0\le a<1$ and $0 \le b \le 1$, $a+b \le 1$. The spectrum of $T$ is easily seen to be $\sigma(T)=\{a,-1,1\}$. 
In addition, if we set $\ga_\lambda:={\rm Ker}(\lambda I- T)$, $\lambda\in\mathbb{C}$, after routine
computations one finds:
\begin{itemize}
	\item[{\bf -}] $\mathfrak{A}_{1}=\bc\begin{bmatrix}
	1 \\ 1 \\ 1
	\end{bmatrix}$, $\, \mathfrak{A}_{-1}=\bc\begin{bmatrix}
	\frac{a+2b-1}{1+a} \\ 1 \\ -1
	\end{bmatrix}$,  $\, \mathfrak{A}_a =\bc\begin{bmatrix}1\\0\\0\end{bmatrix}$;
	\item[{\bf -}]  $\idd := \begin{bmatrix}1 \\ 1\\ 1\end{bmatrix}$ , $v := \begin{bmatrix}
	\frac{a+2b-1}{1+a} \\ 1 \\ -1
	\end{bmatrix}$,
	$w := \begin{bmatrix}
	1 \\ 0 \\ 0
	\end{bmatrix}$;
	\item[{\bf -}] $v^2=\idd+\left(\Big(\frac{a+2b-1}{1+a}\Big)^2-1\right)w$;
	\item[{\bf -}] The space $\ga_1 \bigoplus\ga_{ -1}={\rm span}\left\{\begin{bmatrix}
	1 \\ 1 \\ 1
	\end{bmatrix}\, , \,\begin{bmatrix}
	\frac{a+2b-1}{1+a} \\ 1 \\ -1
	\end{bmatrix}\right\}$ is not closed with respect to the algebra product on $\mathbb{C}^3$. For one has
\begin{align*}
&(t_1\idd +t_2v)(t_3\idd + t_4v) = t_1t_3\idd + (t_1t_4 +t_2t_3)v +t_2t_4v^2\\
=&(t_1t_3+ t_2t_4)\idd + (t_1t_4 +t_2t_3)v +t_2t_4\left(\Big(\frac{a+2b-1}{1+a}\Big)^2-1\right)w\\
 \notin&\,\mathfrak{A}_1\bigoplus\ga_ {-1}\,.
\end{align*}
\end{itemize}

Even so, the direct sum $\ga_{1} \bigoplus \ga_{-1}$ is a $C^*$-algebra w.r.t the new product $\circ$.
Let $P: \bc^3\rightarrow\bc^3$ be the projection given by 
$$P(t_1\idd+t_2v+t_3w):=t_1\idd+t_2v,\quad t_1, t_2, t_3\in\bc\,.$$

The restriction of $T$ to $P(\mathbb{C}^3)=\mathfrak{A}_1\bigoplus\mathfrak{A}_{-1}$ is actually a $*$-isomorphism with respect to $\circ$, that is $T(x\circ y)=(Tx)\circ(Ty)$ for any $x, y\in P(\mathbb{C}^3)$.
Indeed, if $x=x_1\idd+x_2v$ and $y=y_1\idd+y_2v$, we have
$x\circ y=P(xy)=(x_1y_1+x_2y_2)\idd+(x_1y_2+x_2y_1)v$, hence $T(x\circ y)=(x_1y_1+x_2y_2)\idd-(x_1y_2+x_2y_1)v$ as $Tv=-v$.
On the other hand, $Tx\circ Ty= (x_1\idd-x_2v)\circ(y_1\idd-y_2v)=(x_1y_1+x_2y_2)-(x_1y_2+x_2y_1)$, and the claimed equality is thus seen to hold.

\medskip

A second class of working examples is provided by matrices of the form
\begin{equation*}
T = \begin{bmatrix}
0& 0& 1\\
0 & 1-a & a\\
1 & 0 & 0
\end{bmatrix} 
\end{equation*}
where $a$ is a parameter with $0<a<1$. The spectrum of $T$ is given by $\sigma(T) = \{1, -1, 1-a\}$.
The corresponding eigenspaces are now
\begin{equation*}
\mathfrak{A}_1 = \mathbb{C}\begin{bmatrix}
1 \\ 1 \\ 1\end{bmatrix}, \quad \mathfrak{A}_{-1} = \mathbb{C}\begin{bmatrix}
1 \\ \frac{a}{2-a} \\ -1\end{bmatrix}, \quad \mathfrak{A}_{1-a} = \mathbb{C}\begin{bmatrix}
0 \\ 1 \\ 0
\end{bmatrix}.
\end{equation*}
As before, let us set

\begin{equation*}
\idd := \begin{bmatrix}
1 \\ 1 \\ 1
\end{bmatrix}, \quad v:= \begin{bmatrix}
1 \\ \frac{a}{2-a} \\ -1
\end{bmatrix}, \quad w:= \begin{bmatrix}
0 \\ 1 \\ 0
\end{bmatrix},
\end{equation*}
Now $(t_1 \idd + t_2 v)(t_3 \idd + t_4 v) = (t_1t_3+t_2t_4)\idd + (t_1t_4+t_2t_3)v +t_2t_4[a^2/(2-a)^2-1]w=
\notin \mathfrak{A}_{1}\bigoplus\ga_{-1}$.

Once again, in the direct sum $\ga_{1} \bigoplus \ga_{-1}$ it is still  possible to define the new product $\circ$, and verify that the map $T$ restricts to $\ga_1\bigoplus\ga_{-1}$ as a $*$-isomorphism w.r.t. the above product. We leave to the reader all remaining calculations.
\end{examp}

At this stage, it is natural to address the question, whose answer is probably affirmative, whether any Markov chain provides a Hamiltonian dynamical system when restricted to its persistent portion, after changing there the product. 

\medskip 

We next move on to provide working non commutative finite dimensional examples as well.

\begin{examp}

Now in light of a well-known result in \cite{Choi}, the most general completely positive map on a matrix algebra
$M_n(\mathbb{C})$ assumes the form
$$
\Phi(A)=\sum_{i=1}^kV_i^*AV_i\,,\quad A\in M_n(\mathbb{C})\,,
$$
where $k$ is any positive natural number, and $V_i$ an element of $M_n(\mathbb{C})$ for any $i=1, 2, \ldots, n$. Note that
$\Phi$ is unital if and only if $\sum_{i=1}^k V_i^*V_i=I$.\\

For such a purpose, we consider an elementary example for $n=k=2$ by looking at the completely positive map $\Phi$ that corresponds to taking $V_1
=\left(\begin{array}{cc}
0 & 1\\
0& 0\\
\end{array}
\right)
$
and $V_2=V_1^*=\left(\begin{array}{cc}
0 & 0\\
1& 0\\
\end{array}
\right)$\,.
Accordingly, one finds
$$\Phi\left(\left(\begin{array}{cc}
a & b\\
c& d\\
\end{array}
\right)\right)=\left(\begin{array}{cc}
d & 0\\
0& a\\
\end{array}
\right)$$
Eigenvalues and eigenspaces of $\Phi$ are found at once. 

The spectrum $\sigma(\Phi)$ is the set
$\{0, 1, -1\}$, and the corresponding eigenspaces are $\ga_o:=\ker(\Phi)={\rm span}\{V_1, V_2\}$,
$\ga_1:=\ker(I-\Phi)=\mathbb{C}I$, and
$\ga_{-1}:=\ker(I+\Phi)=\mathbb{C}A$, where $A$ is the matrix $\left(\begin{array}{cc}
1 & 0\\
0& -1\\
\end{array}\right)$.

Now the direct sum $\ga_1\bigoplus\ga_{-1}$ is already a $*$-subalgebra since $A^2=1$. Moreover,
the restriction of $\Phi$ to  $\ga_1\bigoplus\ga_{-1}$ preserves the product. Indeed, for any
$t, t',s,s'\in\mathbb{C}$ we have:
\begin{eqnarray*}
&&\Phi\left((tI+sA)(t'I+s'A)\right)= \Phi(tt'I +ts'A +t'sA+ss'I)=\\
&&(tt'+ss')I-(t's+ts')A= (t1-sA)(t'I-s'A)=\\
&& \Phi(tI+sA)\Phi(t'I+s'A)\,.
\end{eqnarray*}

\end{examp}
 
\medskip
 
Notice that, for such a simple example,  Conjecture \ref{cpconj} holds true.  In addition, one might also object that it is too elementary in that  $\ga_1\bigoplus\ga_{-1}$ is already
a $*$-subalgebra and the restriction of
$\Phi$ to it already preserves the product. However, it is not too difficult to conceive slightly more sophisticated  examples
where the product does need to be changed. 

\begin{examp}

By using the conditional expectation
$E$ from $\bm_n(\bc)$ onto the diagonal subalgebra $\bc^n$, that is the subalgebra of all diagonal matrices, given by
$$
E(A)_{ij}=\delta_{i,j}A_{ij}, \quad A=(A_{ij})_{i, j=1}^n\in \bm_n(\bc)\,,
$$ 
any positive map
$T$ on $\bc^n$ can be extended to a completely positive map $\Phi$ on $\bm_n(\bc)$ as $\Phi:=T\circ E$. 
In addition, the spectrum of
$\F$ is the same as the spectrum of $T$ up to $0$, as shown below.
\begin{lem}
With the notations set above, $\sigma(\F)=\sigma(T)\bigcup\{0\}$. 
\end{lem}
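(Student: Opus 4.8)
The plan is to exploit the block structure that $\Phi=T\circ E$ inherits from the direct-sum decomposition of $\bm_n(\bc)$ determined by the conditional expectation $E$. First I would write
$\bm_n(\bc)=\cd\oplus\ker(E)$, where $\cd$ denotes the diagonal subalgebra (identified with $\bc^n$) and $\ker(E)$ is the subspace of matrices with vanishing diagonal. This is a genuine vector-space direct sum precisely because $E$ is an idempotent with range $\cd$ and $E\lceil_\cd=\id$, so every $x\in\bm_n(\bc)$ splits uniquely as $x=E(x)+(x-E(x))$ with $E(x)\in\cd$ and $x-E(x)\in\ker(E)$.

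Next I would check that both summands are $\Phi$-invariant and identify the action of $\Phi$ on each. On the diagonal part one has $\Phi(d)=T(E(d))=T(d)$ for every $d\in\cd$, so $\Phi\lceil_\cd=T$ and in particular $\Phi(\cd)\subset\cd$. On the complementary part $E$ annihilates $\ker(E)$ by definition, whence $\Phi(\ker(E))=T(E(\ker(E)))=\{0\}\subset\ker(E)$. Consequently, with respect to the decomposition $\cd\oplus\ker(E)$ the operator $\Phi$ is block-diagonal, with diagonal blocks $T$ acting on $\cd$ and the zero operator acting on $\ker(E)$.

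From here the conclusion is immediate. In finite dimensions the spectrum of a block-diagonal operator is the union of the spectra of its diagonal blocks, so $\s(\Phi)=\s(T)\bigcup\s\big(0\lceil_{\ker(E)}\big)$. Since $n\geq2$, the space $\ker(E)$ of off-diagonal matrices has dimension $n(n-1)>0$, hence the zero block is the zero operator on a nonzero space and contributes exactly the eigenvalue $0$. This yields $\s(\Phi)=\s(T)\bigcup\{0\}$, as claimed.

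The argument involves no serious obstacle, the whole content being the transparent block decomposition afforded by $E$. The only two points deserving attention are the invariance $\Phi(\ker(E))=\{0\}$, which is where the hypothesis that $E$ is the \emph{diagonal} conditional expectation (and not merely an arbitrary idempotent) enters, and the nonvanishing $\ker(E)\neq\{0\}$, which guarantees the presence of $0$ in the spectrum and fails only in the degenerate case $n=1$, where $\Phi=T$ and the eigenvalue $0$ need not be forced.
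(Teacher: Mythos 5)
Your proof is correct, but it is organized differently from the paper's. The paper rests on the same two structural facts (that $\F$ restricted to the diagonal subalgebra is $T$, and that $\F$ annihilates off-diagonal matrices), yet it never assembles them into a block decomposition: it first notes $0\in\sigma(\F)$ because $\F$ is not injective, observes that $\sigma(T)\subset\sigma(\F)$ since $\F\lceil_{\bc^n}=T$, and then, for the reverse inclusion, takes an eigenvector $A$ with $\F(A)=\lambda A$ and $\lambda\neq 0$, writes $A=\frac{1}{\lambda}T(E(A))$ to conclude that $A$ is diagonal, hence $A=E(A)$ and $T(A)=\lambda A$, so $\lambda\in\sigma(T)$. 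Your version instead makes the splitting $\bm_n(\bc)=\cd\oplus\ker(E)$ explicit, identifies $\F$ as $T\oplus 0$, and invokes the general principle that the spectrum of an operator which is block-diagonal with respect to a direct sum of closed invariant subspaces is the union of the spectra of the blocks. This buys two things: both inclusions are obtained in a single stroke, and the argument does not actually use finite-dimensionality (the direct-sum spectral fact holds on any Banach space for a topological direct sum), whereas the paper's argument relies on nonzero spectral points being eigenvalues. Your closing remark about the degenerate case $n=1$ is also a genuine point the paper glosses over: there $E=\id$, $\F=T$, and the stated equality can fail (take $T=\id$ on $\bc$), so $n\geq 2$ is tacitly assumed; this is the same tacit assumption hiding behind the paper's claim that $\F$ is not injective.
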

\begin{proof}
Since $\F$ is not injective, $0$ belongs to $\sigma(\F)$, and in addition
the inclusion $\sigma(T)\cup\{0\}\subset\sigma(\F)$ is obvious because the restriction of $\F$ to $\bc^n$ is $T$. Therefore,
all we have to do is prove that if $\lambda\neq 0$ sits in $\sigma(\F)$, then $\lambda$ also sits in
$\sigma(T)$. Let $A\in \bm_n(\bc)$ such that 
$\F(A)=\lambda A$, that is
$T(E(A))=\lambda A$. From $A=\frac{1}{\lambda}T(E(A))$, we see that $A$ must be in $\bc^n$. But then,
$A=E(A)$, which means $\lambda\in\sigma(T)$ because 
$$
T(A)=T(E(A))=\F(A)=\lambda A\,.
$$
\end{proof}

As an immediate result of the above lemma, whenever one is given a positive map $T$ on $\bc^n$ with the properties in Remark \ref{decch}, then its extension
$\F=T\circ E$ to $\bm_n(\bc)$ still have the same properties.
In particular, this applies to the commutative examples we discussed at the beginning of the present section, thus providing
instances of completely positive maps on a full matrix algebra such that the conditions in
Remark \ref{decch} are fulfilled in a nontrivial way.

\end{examp}

\section*{Acknowledgements}

We would like to thank Jochen Gl\"{u}ck for many useful discussions.
We also owe a debt of gratitude to the anonymous referee for his or her careful reading of
the manuscript.
The first-named author acknowledges the \lq\lq MIUR Excellence Department Project'' awarded to the Department of Mathematics, University of Rome Tor Vergata, CUP E83C18000100006.

\end{document}